\numberwithin{equation}{section}
\begin{document}
\newtheorem{defi}{Definition}
\newtheorem{theo}{Theorem}[section]
\newtheorem{prop}[theo]{Proposition}
\newtheorem{lemm}[theo]{Lemma}
\newtheorem{coro}[theo]{Corollary}
\newtheorem{hypo}[theo]{Hypothesis}
\newcommand{\bp}{\begin{proof}}
\newcommand{\ep}{\end{proof}}
\let\le=\leqslant
\let\ge=\geqslant
\let\leq=\leqslant
\let\geq=\geqslant
\date{}
\author{E. I. Khukhro}
\address{Charlotte Scott Research Centre for Algebra,\newline\indent University of Lincoln, Lincoln, LN6 7TS, U.K., and\newline\indent  Sobolev  Institute of Mathematics, Novosibirsk, 630090, Russia}
\email{khukhro@yahoo.co.uk}
\author{P. Shumyatsky}
\address{Department of Mathematics, University of Brasilia, DF~70910-900, Brazil}
\email{pavel@unb.br}
\author{G. Traustason}
\address{Department of Mathematical Sciences,
University of Bath, Bath
BA2 7AY, U.K.}
\email{G.Traustason@bath.ac.uk}

\title[Right Engel-type subgroups and length parameters of finite groups]{Right Engel-type subgroups\\ and length parameters of finite groups}

\maketitle

\begin{abstract}
Let $g$ be an element of a finite
group $G$ and let $R_{n}(g)$ be the subgroup generated by all the right Engel values $[g,{}_{n}x]$ over $x\in G$. In the case when
$G$ is soluble we prove that if, for some $n$, the Fitting height of $R_{n}(g)$ is equal to $k$, then $g$ belongs to the $(k+1)$th Fitting subgroup $F_{k+1}(G)$. For nonsoluble $G$, it is proved that if, for some $n$, the  generalized Fitting height of $R_n(g)$ is equal to $k$, then $g$ belongs to the generalized Fitting subgroup $F^*_{f(k,m)}(G)$ with $f(k,m)$ depending only on $k$ and $m$, where $|g|$ is the product of $m$ primes counting multiplicities. It is also proved that if, for some $n$, the nonsoluble length of   $R_n(g)$ is equal to $k$, then $g$ belongs to a normal subgroup whose nonsoluble length is bounded in terms of $k$ and $m$. Earlier similar generalizations of Baer's theorem (which states that an Engel element of a finite group belongs to the Fitting subgroup) were obtained by the first two authors in terms of left Engel-type subgroups.
\end{abstract}

\section{Introduction}

Recall that the $n$-Engel word $[y,{}_{n}x]$ is defined recursively by
$[y,{}_{0}x]=y$ and $[y,{}_{i+1}x]=[[y,{}_{i}x],x]$.
Let $G$ be a finite group.
In \cite{Khb} the authors considered the subgroups
$$
E_{n}(g)=\langle [x,{}_{n}g]:\,x\in G\rangle,
$$
where $g\in G$ and $n$ is a positive integer. By a well-known result of Baer \cite{Ba} we know that if $E_{n}(g)=1$, then $g$ belongs to the Fitting subgroup of $G$. The authors of \cite{Khb} generalized Baer's result in terms of various length parameters of $E_{n}(g)$. The aim of the present paper is to consider the right Engel analog of this setting and to prove the right Engel versions of the three main theorems in \cite{Khb}. Instead of $E_{n}(g)$ we consider
$$
R_{n}(g)=\langle [g,{}_{n}x]:\,x\in G\rangle.
$$
Before stating our first main result, we recall some terminology.
For a finite group $G$, the Fitting series is defined recursively by $F_{0}(G)=1$ and $F_{i+1}(G)/F_{i}(G)=F(G/F_{i}(G))$ where $F(H)$ is the Fitting subgroup of $H$. If $G$ is soluble, then the Fitting height $h(G)$ is the least integer $h$ such that $F_{h}(G)=G$. We can now state our first result that is about finite soluble groups.

\begin{theo}\label{t1.1}
Let $g$ be an element of a finite soluble group $G$ and $n$ a positive integer. If the Fitting height of $R_{n}(g)$ is equal to $k$, then $g$ belongs to $F_{k+1}(G)$.
\end{theo}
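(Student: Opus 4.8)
The plan is to argue by induction on $k=h(R_n(g))$, reducing at each step modulo the Fitting subgroup $F(G)$. For the base case $k=0$ we have $R_n(g)=1$, that is $[g,{}_nx]=1$ for every $x\in G$, so $g$ is a right $n$-Engel element of $G$. By Heineken's classical theorem the inverse $g^{-1}$ is then a left $m$-Engel element for some $m=m(n)$, so $E_{m}(g^{-1})=1$; Baer's theorem quoted in the introduction gives $g^{-1}\in F(G)$ and hence $g\in F(G)=F_1(G)$, as required.

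For the inductive step set $\bar G=G/F(G)$ and $\bar g=gF(G)$. Since the Engel word map is compatible with quotients, the image of $R_n(g)$ in $\bar G$ is exactly $R_n(\bar g)$, and in particular $h(R_n(\bar g))\le h(R_n(g))=k$. The whole theorem follows once we establish the \emph{strict} drop
\[
h(R_n(\bar g))\le k-1 ,
\]
because then the induction hypothesis applied to $\bar g$ in $\bar G$ yields $\bar g\in F_{k'+1}(\bar G)\subseteq F_{k}(\bar G)=F_{k+1}(G)/F(G)$, where $k'=h(R_n(\bar g))\le k-1$, and therefore $g\in F_{k+1}(G)$. Since $R_n(\bar g)\cong R_n(g)/\bigl(R_n(g)\cap F(G)\bigr)$, it suffices for this drop to prove the inclusion $F(R_n(g))\le F(G)$: once the Fitting subgroup of $R_n(g)$ is absorbed into $F(G)$, the quotient $R_n(\bar g)$ is an image of $R_n(g)/F(R_n(g))$ and so has Fitting height at most $k-1$. (Equivalently, the theorem reduces to the clean assertion that $R_n(g)\le F_k(G)$, after which $\bar g$ becomes a right $n$-Engel element of $\bar G$ and the base case applies in the top quotient.)

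I expect the inclusion $F(R_n(g))\le F(G)$ to be the heart of the matter and the main obstacle. The difficulty is that $R_n(g)$ is \emph{not} normal in $G$: conjugation only gives $R_n(g)^{y}=R_n(g^{y})$, so $R_n(g)$ is merely $\langle g\rangle$-invariant, and one cannot invoke the identity $F(N)=N\cap F(G)$ that trivialises the corresponding step for the left-Engel subgroups $E_n(g)=[G,{}_ng]$ treated in \cite{Khb}. To circumvent this I would exploit that $M:=F(R_n(g))$, being characteristic in the $g$-invariant subgroup $R_n(g)$, is itself $g$-invariant and nilpotent, and then pass to its normal closure, reducing the claim $M\le F(G)$ to showing that $M^{G}=\langle F(R_n(g^{y})):y\in G\rangle$ is nilpotent. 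I would analyse this on the chief factors of $G$ lying above $F(G)$, using that modulo an appropriate nilpotent layer the generators $[g,{}_nx]$ satisfy a right $n$-Engel condition, and combine a coprime-action argument with Heineken's reduction to force $M$ to centralise those factors and hence to lie in $F(G)$. Verifying this last point rigorously — in effect recovering, for the non-normal subgroup $R_n(g)$, the good behaviour of the Fitting subgroup under reduction modulo $F(G)$ — is where the real work lies.
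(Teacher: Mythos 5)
There is a genuine gap, and it sits exactly where you placed it yourself: the inclusion $F(R_n(g))\le F(G)$ (equivalently, the strict drop $h(R_n(\bar g))\le k-1$ in $\bar G=G/F(G)$) is never proved, only flagged as ``where the real work lies.'' This is not a routine verification that can be deferred: since $R_n(g)$ is neither normal nor subnormal in $G$, none of the radical identities of the form $F(N)=N\cap F(G)$ apply, and the inclusion you need is in fact a statement \emph{stronger} than the theorem itself (it would yield, within your induction, $R_n(g)\le F_k(G)$, which locates the whole subgroup $R_n(g)$ rather than just the element $g$; the paper never proves, and does not need, any such statement). Your claim that the theorem is ``equivalent'' to $R_n(g)\le F_k(G)$ is also unjustified: the theorem follows from that assertion, but not conversely --- from $g\in F_{k+1}(G)$ one only gets $R_n(g)\le\langle g^G\rangle\le F_{k+1}(G)$. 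Finally, the repair you sketch is itself doubtful: $M^G=\langle F(R_n(g^y)):y\in G\rangle$ is a product of conjugates of a non-normal nilpotent subgroup, and such products have no tendency to be nilpotent (distinct conjugates need not normalize one another --- compare distinct Sylow subgroups), so ``reducing'' $M\le F(G)$ to the nilpotency of $M^G$ replaces one unproved claim by another that looks no easier. Your base case, by contrast, is fine: right $n$-Engel implies the inverse is left Engel, and Baer's theorem applies.

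The paper's proof avoids quotient-by-$F(G)$ induction altogether, and the idea it uses is precisely what your proposal lacks. One forms the series $G\ge [G,g]\ge [[G,g],g]\ge\cdots$ down to its stable term $H$ with $[H,g]=H$; Proposition~\ref{p2.1} (proved by a minimal-counterexample argument on a minimal normal $g$-invariant subgroup, using Lemma~\ref{l0}) shows that any soluble $g$-invariant group $H$ with $H=[H,g]$ satisfies $H=R_{H\langle g\rangle,n}(g)$, hence $H\le R_n(g)$ and $h(H)\le k$. Since $H$ is subnormal in $G$, Lemma~\ref{l-sn}(b) gives $h(\langle H^G\rangle)=h(H)\le k$, so $N=\langle H^G\rangle\le F_k(G)$; and $g$ is a left Engel element modulo $N$ by construction of $H$, so Baer's theorem yields $gN\in F(G/N)$ and therefore $g\in F_{k+1}(G)$. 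In other words, instead of trying to control the Fitting subgroup of the badly-behaved subgroup $R_n(g)$, the paper manufactures a \emph{subnormal} subgroup $H$ inside $R_n(g)$ whose Fitting height is automatically bounded by $k$, and lets Baer's theorem do the rest. Your proposal contains no substitute for Proposition~\ref{p2.1}, and without it (or something equally strong) the inductive step cannot be completed.
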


For nonsoluble finite groups we prove similar results in terms of the nonsoluble length and the generalized Fitting height of $R_{n}(g)$. We recall the relevant definitions. The \textit{generalized Fitting series} is defined recursively by $F_{0}^{*}(G)=1$ and $F_{i+1}^{*}(G)/F_{i}^{*}(G)=F^{*}(G/F_{i}^{*}(G))$, where $F^{*}(H)$ is the generalized Fitting subgroup of $H$, which is the characteristic subgroup generated by the Fitting subgroup $F(H)$ and all the subnormal quasisimple subgroups (a group $Q$ is quasisimple if $[Q,Q]=Q$ and $Q/Z(Q)$ is simple). The least number $h$ such that $F_{h}^{*}(G)=G$ is called the
\textit{generalized Fitting height} $h^{*}(G)$ of $G$. Notice that if $G$ is soluble, then $h^{*}(G)=h(G)$ is the ordinary Fitting height of $G$. We are now ready for the statement of the second main result.

\begin{theo}\label{t1.2}
Let $m$ and $n$ be positive integers, and let $g$ be an element of a finite group $G$ whose order $|g|$ is the product of $m$ primes counting multiplicities. If the generalized Fitting height of $R_{n}(g)$ is equal to $k$, then $g$ belongs to $F_{f(k,m)}^{*}(G)$ where $$f(k,m)=((k+1)m(m+1)+2)(k+3)/2.$$
\end{theo}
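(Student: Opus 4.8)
The plan is to reformulate the conclusion as a statement about the normal closure of $g$ and then to argue by a double induction on $k$ and $m$. Since every $F^{*}_{s}(G)$ is normal and contains $g$ precisely when it contains the normal closure $N=\langle g^{G}\rangle$, it suffices to bound the generalized Fitting depth of $N$ by $f(k,m)$. The structural fact I would record first is that, although $R:=R_{n}(g)$ need not be normal in $G$, it is invariant under $g$, since $[g,{}_{n}x]^{g}=[g,{}_{n}x^{g}]$ is again a right Engel value; hence $H:=R\langle g\rangle$ is a subgroup with $R\trianglelefteq H$, and in the quotient $H/R$ the coset $gR$ is a right $n$-Engel element, so Baer's theorem yields $gR\in F(H/R)$. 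Applied inside suitable $g$-invariant sections of $G$, this is the local mechanism that converts the hypothesis $h^{*}(R)=k$ into control of $g$ one generalized Fitting layer at a time.

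For the induction on $k$, the base case $k=0$ means $R_{n}(g)=1$, that is, $g$ is a right $n$-Engel element, and Baer's theorem places $g$ in $F(G)=F^{*}_{1}(G)$. For the inductive step I would choose an integer $t$, depending on the current depth, and pass to the quotient $\overline{G}=G/F^{*}_{t}(G)$ arranged so that the image of $R_{n}(g)$ has generalized Fitting height at most $k-1$; the inductive hypothesis then locates $\bar g$ in $F^{*}_{f(k-1,m)}(\overline{G})$, and pulling back places $g$ in $F^{*}_{f(k-1,m)+t}(G)$. The soluble layers of the generalized Fitting series are handled by Theorem~\ref{t1.1}, because on a soluble section the generalized Fitting height coincides with the ordinary Fitting height; this accounts for the part of the bound governed by the increment $f(k,m)-f(k-1,m)=1+\tfrac{m(m+1)}{2}(2k+3)$.

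The dependence on $m$ I would introduce as an outer induction. Choosing a prime divisor $p$ of $|g|$ and passing to the power $g^{p}$, whose order is a product of $m-1$ primes, I would first locate $\langle (g^{p})^{G}\rangle$ by the inductive hypothesis for $m-1$, and then analyse $g$ modulo this subgroup together with the layers already found, where $g$ now behaves essentially as an element of prime order. The extra cost of one such step is the increment $f(k,m)-f(k,m-1)=m(k+1)(k+3)$, and the heart of the whole argument is therefore the prime-order case $m=1$, proved by the induction on $k$ above.

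The step I expect to be the main obstacle is the treatment of the nonsoluble, that is, semisimple, layers of the generalized Fitting series, on which Theorem~\ref{t1.1} gives no information. Here one must show that the bounded generalized Fitting height of $R_{n}(g)$ genuinely restricts the action of $g$ on a product of quasisimple groups: $g$ can neither permute the components too wildly nor act too freely on the central sections while keeping all values $[g,{}_{n}x]$ inside a subgroup of height $k$. The tools for this are the self-centralizing property $C_{G}(F^{*}(G))\le F^{*}(G)$, Schreier's conjecture (so that the action of $g$ on a semisimple section is soluble modulo inner automorphisms and thus feeds back into the soluble theory), and the quantitative theory of the nonsoluble length. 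Converting these qualitative facts into an explicit crossing cost per layer, and checking that the accumulated costs sum exactly to $f(k,m)=((k+1)m(m+1)+2)(k+3)/2$, is the delicate bookkeeping that the formal proof must carry out.
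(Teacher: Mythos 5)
Your proposal is a plan rather than a proof, and both of its inductive mechanisms break at exactly the point where the real difficulty of the theorem lies. The step ``pass to the quotient $\overline{G}=G/F^{*}_{t}(G)$ arranged so that the image of $R_{n}(g)$ has generalized Fitting height at most $k-1$'' cannot be arranged: $R_{n}(g)$ is not subnormal in $G$, so there is no containment in either direction between the terms of its generalized Fitting series and the subgroups $F^{*}_{t}(G)$; in particular $F^{*}_{t}(G)\cap R_{n}(g)$ need not contain even $F^{*}(R_{n}(g))$, so quotienting by $F^{*}_{t}(G)$ need not lower the height of the image of $R_{n}(g)$ at all. (The paper is careful to state the inclusion $F_{i}(G)\cap H\leq F_{i}(H)$ only for the ordinary Fitting series, and anyway that inclusion goes the wrong way for your purposes.) This is precisely the obstruction the whole paper is built to circumvent, not a step one can postulate. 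The outer induction on $m$ has a similar flaw: $R_{n}(g^{p})$ bears no controlled relation to $R_{n}(g)$, so the hypothesis ``$h^{*}$ of the right Engel subgroup is at most $k$'' is simply not available for the power $g^{p}$. (The paper's own induction on $m$, inside Theorem~\ref{t1.3}, passes instead to the \emph{image} of $g$ in a quotient $G/D_{j}$, where the image of $R_{n}(g)$ is exactly $R_{n}(\bar g)$ --- that is what makes the induction legitimate.) Finally, the local Baer mechanism you record is vacuous: with $H=R\langle g\rangle$ the quotient $H/R$ is cyclic, so $gR\in F(H/R)$ says nothing; Baer's theorem only yields information modulo a \emph{normal} subgroup of $G$. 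And the treatment of the nonsoluble layers, which you defer as ``delicate bookkeeping,'' is the entire content of Theorem~\ref{t1.3} (Proposition~\ref{p2.4} and the orbit-purity analysis); it is not bookkeeping and no sketch of it is given.

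For contrast, the paper's proof uses no induction on $k$ or $m$ at this stage. It forms the stabilized commutator subgroup $H=[G,g,g,\dots,g]$ with $[H,g]=H$, which \emph{is} subnormal in $G$; Baer's theorem applied in $G/\langle H^{G}\rangle$ reduces everything to bounding $h^{*}(H)$. Theorem~\ref{t1.3} (whose proof contains all the hard nonsoluble analysis) bounds the nonsoluble length of $H$ by $(k+1)m(m+1)/2$, since $H=[H,g]\leq D_{(k+1)m(m+1)/2}(G)$ and $H$ is subnormal. Then, for each soluble factor $X$ of the upper nonsoluble series of $H$, Theorem~\ref{t1.1} applied to $X\rtimes\langle g\rangle$ gives $[X,g]\leq F_{k+1}(X)$, and the identity $H=[H,g]$ forces $X=F_{k+2}(X)$; summing the contributions of the soluble and nonsoluble factors yields exactly $f(k,m)-1$ for $h^{*}(H)$. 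Your increment computations $f(k,m)-f(k-1,m)$ and $f(k,m)-f(k,m-1)$ are arithmetically correct but do not correspond to any step that your argument actually justifies.
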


The proof of Theorem~\ref{t1.2}  follows from Theorem~\ref{t1.1} and our last result which involves another length parameter, the nonsoluble length. We define the \textit{upper nonsoluble series} of a finite group $G$ recursively as follows. Let $B_{0}=1$ and let $D_{0}$ be the soluble radical of $G$. Then let $B_{i+1}/D_{i}$ be the product of all the normal simple non-abelian subgroups of $G/D_{i}$, and $D_{i+1}/B_{i+1}$ the soluble radical of $G/B_{i+1}$. The \textit{nonsoluble length}  $\lambda(G)$ is defined to be the first integer $l$ such that $D_{l}=G$, so that we have a normal series
$$
1=B_{0}\leq D_{0}<B_{1}\leq D_{1}\cdots <B_{l}\leq D_{l}=G.
$$
It is not difficult to see that $\lambda(G)$ is the minimum number of nonsoluble factors in a normal series where each factor is either soluble or a direct product of non-abelian simple groups. In particular, a group is soluble if and only if the nonsoluble length is $0$. We can now state our final main result.

\begin{theo}\label{t1.3}
Let $m$ and $n$ be positive integers, and let $g$ be an element of a finite group $G$ whose order $|g|$ is
the product of $m$ primes counting multiplicities. If the nonsoluble length of $R_{n}(g)$ is equal to $k$, then
$g$ belongs to $D_{f_1(k,m)}(G)$ where $f_1(k,m)=(k+1)m(m+1)/2$.
\end{theo}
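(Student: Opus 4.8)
The plan is to induct on $m$, the number of prime factors of $|g|$, after reducing to a minimal counterexample. First I would record the one structural fact about $R_{n}(g)$ that is immediate: since $[g,{}_{n}x]^{g}=[g,{}_{n}x^{g}]$ and $x^{g}$ runs over $G$ as $x$ does, the element $g$ normalises $R:=R_{n}(g)$; hence $H:=\langle g\rangle R$ is a subgroup with $R\trianglelefteq H$ and $H/R$ cyclic, so $\lambda(H)=\lambda(R)=k$ by monotonicity and subadditivity of the nonsoluble length. The difficulty is therefore not the internal structure of $H$ but the \emph{position} of $g$ in the upper nonsoluble series of $G$: the content of the theorem is that $g$ cannot sit too many nonsoluble layers above the place where its Engel values actually live. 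I would pass to the normal closure $\langle g\rangle^{G}$ (which contains every $[g,{}_{n}x]$) and factor out soluble radicals where harmless, so as to assume that every minimal normal subgroup met is either elementary abelian or a product of non-abelian simple groups, and that $g\notin D_{f_{1}(k,m)}(G)$.

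The base case is $m=1$, i.e.\ $g$ of prime order $p$, where the target is $g\in D_{k+1}(G)$. Here I would argue by contraposition through the nonsoluble layers $B_{i+1}/D_{i}$ lying above $g$. The mechanism linking these layers to $R_{n}(g)$ is the following computation: if $V=S_{1}\times\cdots\times S_{t}$ is such a layer and $g$ does not centralise $V$, then for $s\in V$ one has $[g,s]\in V$, and because $[V,g]$ is normal in $V\langle g\rangle$ with $V$ semisimple, the values $[g,s]$ already generate a nontrivial product of the $S_{i}$; crucially, on a simple group the identity $[S,{}_{j}\sigma]=[S,\sigma]=S$ persists for all $j$ once the action is nontrivial, so the higher values $[g,{}_{n}s]$ still generate a nonsoluble section. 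Thus every nonsoluble layer above $g$ on which $g$ acts nontrivially is \emph{recorded} as a nonsoluble section inside $R_{n}(g)$, and assembling these across the series of $G$ produces a subnormal chain in $R_{n}(g)$ with more than $k$ nonsoluble factors whenever $g\notin D_{k+1}(G)$, contradicting $\lambda(R_{n}(g))=k$. Theorem~\ref{t1.1} is available here to dispose of the interposed soluble sections without cost to the nonsoluble count.

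For the inductive step I would peel off one prime, passing from $g$ to $h=g^{p}$ with $|h|$ a product of $m-1$ primes, apply the inductive hypothesis to $h$, and then work in the quotient by the resulting term of the series, where the image of $g$ has prime order and the base case applies. The honest obstacle is exactly this peeling: there is no containment between $R_{n}(g^{p})$ and $R_{n}(g)$, and a \emph{non-coprime} prime-power element can act on a product of simple groups through inner automorphisms and behave \emph{diagonally} across several layers, so that $g$ may centralise a nonsoluble layer in the Engel sense while still lying above it, and $\lambda\bigl(R_{n}(g^{p})\bigr)$ need not be bounded by $k$. Controlling this, namely bounding, for an element of order a power of $p$, the number of nonsoluble layers it can penetrate per unit of nonsoluble length of its Engel subgroup, is where detailed information on automorphisms of the finite simple groups (ultimately via the classification) must enter. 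Tracking the worst-case cost of each of the $m$ peelings and summing the increments $(k+1)\cdot 1,(k+1)\cdot 2,\dots,(k+1)\cdot m$ yields the bound $f_{1}(k,m)=(k+1)\,m(m+1)/2$, and I expect this accounting, rather than any single step, to be the main labour of the proof.
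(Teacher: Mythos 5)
Your outer shell (induction on $m$, reducing the number of prime divisors of $|g|$ at each step) matches the paper, but both substantive steps have genuine gaps, the second of which you flag yourself. In the base case $m=1$ your mechanism conflates two different subgroups: the identity $[S,{}_{j}\sigma]=[S,\sigma]=S$ concerns the iterated commutator subgroups $[\dots[[S,\sigma],\sigma],\dots,\sigma]$, in which $\sigma$ is repeated, whereas the subgroup you must control is $\langle [\sigma,{}_{n}s]:s\in S\rangle$, generated by the values $[\sigma,s,s,\dots,s]$ in which $s$ is repeated. There is no easy argument that the latter covers $S$, or even contains a nonsoluble subnormal section, merely because $g$ acts nontrivially. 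This is exactly the pure/non-pure orbit dichotomy on which the paper's proof turns: covering (Lemma~\ref{l2.3}) is established only for \emph{pure} orbits, where the point stabilizer $N_{\langle g\rangle}(S_{1})$ acts trivially on the whole product $S$. Non-pure orbits occur already when $m=1$: an element $g$ of order $p$ can fix a simple factor $S$ and induce on it a nontrivial automorphism of order $p$, and then your ``recording'' claim has no justification. The paper handles such orbits not by covering but by descending to the next layer $U_{i-1}$ via Proposition~\ref{p3.3}, together with a counting argument (at most $m-1$ non-pure passages, since orbit lengths strictly increase in the order $\prec$ and are constrained by $|g|$) that forces $s$ consecutive pure orbits among $ms$ layers. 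Moreover, even for pure orbits the ``assembling'' you gesture at is itself nontrivial: it is Lemma~\ref{l3.4}, which needs the covering subgroups $\hat{S}\leq R_{n}(g)$ and the element $x_{0}$ supplied by Proposition~\ref{p3.2} to show that consecutive pure orbits mark strictly descending nonsoluble factors of the upper nonsoluble series of $R_{n}(g)$.

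For the inductive step, the obstacle you honestly identify (no containment between $R_{n}(g^{p})$ and $R_{n}(g)$) is fatal to the route through $h=g^{p}$, and the paper's proof is designed precisely to avoid it: one never passes to a power of $g$. Instead, the contrapositive of Proposition~\ref{p2.4} (whose proof is the orbit machinery above) gives $\langle g\rangle\cap K_{(k+1)m}\neq 1$, hence $\langle g\rangle\cap D_{(k+1)m}\neq 1$ since $K_{j}\leq D_{j}$ by the Schreier conjecture; one then passes to the quotient $\bar{G}=G/D_{(k+1)m}$, where the image $\bar{g}$ has at most $m-1$ prime factors and, crucially, $R_{n}(\bar{g})$ is a homomorphic image of $R_{n}(g)$, so $\lambda(R_{n}(\bar{g}))\leq k$ comes for free. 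Induction then gives $\bar{g}\in D_{(k+1)(m-1)m/2}(\bar{G})$, which pulls back to $g\in D_{(k+1)m(m+1)/2}(G)$. So the prime-peeling happens through quotients by terms of the upper nonsoluble series of $G$, not through powers of $g$; this is the missing idea, and without it (and without the pure/non-pure machinery behind Proposition~\ref{p2.4}) your outline does not close.
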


Theorems~\ref{t1.2} and \ref{t1.3} depend on the classification of finite simple groups in so far as they rely on the validity of the Schreier conjecture that
the outer automorphism group of a finite simple group is soluble. Bounds for the nonsoluble length and generalized Fitting height of a finite group were implicitly used in the reduction of the Restricted Burnside Problem to soluble and nilpotent groups in the Hall--Higman paper \cite{Ha}. Such bounds also find applications in the study of profinite groups; see for example \cite{Kha,W}.

It is well-known that the inverse of a right Engel element is a left Engel element.
But there is no straightforward connection between the right and left Engel-type subgroups $R_n(G)$ and $E_n(G)$. Therefore the results of the current paper do not follow directly from the results on the subgroups $E_n(G)$ in \cite{Khb}, although the proofs make use of several auxiliary propositions from that paper.

\section{Preliminaries}

First we mention some elementary radical properties of the subgroups $F_{i}(G)$, $F_{i}^{*}(G)$, and $D_{i}(G)$ that will be used throughout the paper.
If $N$ is a normal subgroup of $G$, then
 $$
 F_{i}(N)=N\cap F_{i}(G),\quad F_{i}^{*}(N)=N\cap F_{i}^{*}(G),\quad
		 D_{i}(N)=N\cap D_{i}(G).
$$
It follows that the same equalities hold whenever $N$ is a subnormal subgroup of~$G$. Therefore the Fitting height (when $G$ is soluble), the generalized Fitting height, and the nonsoluble length of $N$ do not exceed the corresponding parameters of $G$ when $N$ is subnormal in $G$.

Let $S_{i}(G)$ be one of $F_{i}(G)$, $F_{i}^{*}(G)$, or $D_{i}(G)$. If $N$ is a subnormal subgroup of $G$, then for the normal closures in $G$ we have
$$
\langle S_{i}(N)^{G}\rangle =\langle (N\cap S_{i}(G))^{G}\rangle \leq \langle N^{G}\rangle \cap S_{i}(G)=S_{i}(\langle N^{G}\rangle ).
$$
Since $N$ is subnormal in $\langle N^{G}\rangle$, we also have $S_{i}(N)=N\cap S_{i}(\langle N^{G}\rangle)$. Hence the Fitting height (when $G$ is soluble), the generalized Fitting height,  and the nonsoluble length of $N$ are the same as for $\langle N^{G}\rangle$.

We collect these properties in the following lemma for further references.

\begin{lemm}
\label{l-sn}
Let $N$ be a subnormal subgroup of a finite group $G$.

{\rm (a)} The Fitting height (when $G$ is soluble), the generalized Fitting height, and the nonsoluble length of $N$ do not exceed the corresponding parameters of $G$.

{\rm (b)} The Fitting height (when $N$ is soluble), the generalized Fitting height, and the nonsoluble length of the normal closure $\langle N^G\rangle$ are equal to the corresponding parameters of~$N$.
\end{lemm}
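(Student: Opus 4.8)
The plan is to deduce both parts from a single radical identity, namely $S_i(N) = N \cap S_i(G)$, where $S_i$ stands for any of $F_i$, $F_i^*$, or $D_i$. This identity holds for normal $N$ by the standard radical properties recorded at the start of the section, and I would first extend it to subnormal $N$ by an immediate induction along a subnormal chain $N = N_0 \trianglelefteq N_1 \trianglelefteq \cdots \trianglelefteq N_r = G$, applying the normal case at each step. With this identity available for all subnormal $N$, the rest is a matter of reading off the consequences uniformly for the three series.

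For part (a), I would let $h$ be the relevant parameter of $G$, so that $S_h(G) = G$, and then read off $S_h(N) = N \cap S_h(G) = N$. Since each of the three parameters is by definition the least index $i$ with $S_i$ equal to the whole group, this shows at once that the corresponding parameter of $N$ is at most that of $G$.

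For part (b), I would set $M = \langle N^G \rangle$, which is normal in $G$ and contains $N$ as a subnormal subgroup. One inequality is then free: by part (a) the parameter of $N$ is at most that of $M$. For the reverse, let $h$ be the parameter of $N$, so that $S_h(N) = N$; applying the subnormal identity inside $M$ gives $N = N \cap S_h(M)$, whence $N \le S_h(M)$. The crucial observation is then that $S_h(M) = M \cap S_h(G)$ is normal in $G$, being the intersection of the normal subgroup $M$ with the characteristic subgroup $S_h(G)$, so it must contain the normal closure $\langle N^G \rangle = M$; therefore $S_h(M) = M$ and the parameter of $M$ is at most $h$. Combining the two inequalities gives the desired equality, and the argument runs uniformly across all three series.

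The main thing to get right is the radical identity for subnormal $N$, together with the claim that $S_h(M)$ is normal in $G$ rather than merely in $M$, since it is precisely this global normality that lets $S_h(M)$ absorb the whole normal closure. Once these points are settled the lemma is essentially a repackaging of the displayed computations preceding its statement, so I expect no serious obstacle beyond this bookkeeping.
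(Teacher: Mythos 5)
Your proposal is correct and follows essentially the same route as the paper: both rest on the radical identity $S_i(N)=N\cap S_i(G)$ extended from normal to subnormal $N$, read off part (a) from $S_h(G)=G$, and prove part (b) by observing that $S_h(\langle N^G\rangle)=\langle N^G\rangle\cap S_h(G)$ is normal in $G$ and contains $N$, hence absorbs the whole normal closure (this is exactly the content of the paper's displayed inequality $\langle S_i(N)^G\rangle\le S_i(\langle N^G\rangle)$, specialized to $i=h$).
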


Furthermore, $F_{i}(G)\cap H\leq F_{i}(H)$ whenever $H\leq G$, and for any normal subgroup $N$ of $G$ we have
$$
F_{i}(G/N)\geq F_{i}(G)N/N,\quad F_{i}^{*}(G/N)\geq F_{i}^{*}(G)N/N,\quad  D_{i}(G/N)\geq D_{i}(G)N/N.
$$

When a group $A$ acts by automorphisms on a  group $G$, we regard $A$ as a subgroup of the semidirect product $GA$ and use the commutator and centralizer notation like $[G,A]$ and $C_G(A)$. We use without special references the well-known properties of coprime actions:  if $\alpha $ is an automorphism of a finite group $G$ of coprime order, $(|\alpha |,|G|)=1$, then $ C_{G/N}(\alpha )=C_G(\alpha )N/N$ for any $\alpha $-invariant normal subgroup $N$, the equality  $[G,\alpha ]=[[G,\alpha,],\alpha ]$ holds, and if $G$ is in addition abelian, then $G=[G,\alpha ]\times C_G(\alpha )$.

It is convenient to introduce a more specialized notation for right Engel-type subgroups. Let $G$ be a group and $g\in G$. For a $g$-invariant subgroup $H$ of $G$ we set
$$
R_{H,n}(g)=\langle [g,{}_{n}x]:\,x\in H\rangle.
$$
Thus, for $H=G$ this is the subgroup $R_{n}(g)=\langle [g,{}_{n}x]:\,x\in G\rangle$ introduced above. It is clear from the definition that for $g$-invariant subgroups $K\leq H$ we have $R_{K,n}(g)\leq R_{H,n}(g)$, and if $N$ is a $g$-invariant normal subgroup of $H$, then $R_{HN/N,n}(\bar g)$ is the image of $R_{H,n}(g)$ in $H/N$. Another obvious property is that $C_{H}(g)$ normalizes $R_{H,n}(g)$. We shall use these properties of right Engel-type subgroups without special references.

We record here a simple property of abelian-by-cyclic subgroups.

\begin{lemm}\label{l0}
If $A$ is an abelian subgroup of a group $G$ normalized by an element $g\in G$, then for any $a\in A$ we have $[g,{}_n ga]=[a^{-1},{}_ng]$.
\end{lemm}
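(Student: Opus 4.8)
The plan is to reduce everything to the abelian group $A$ and to exploit the fact that, once a commutator lands inside $A$, the factor $a$ in the product $ga$ becomes invisible. First I would record the two structural facts I intend to use. Since $A$ is abelian and normalized by $g$, we have $[A,g]\le A$, so that the right-hand side $[a^{-1},{}_ng]$ lies in $A$ for every $n\ge 1$. The second, and crucial, fact is the absorption identity $[c,ga]=[c,g]$ for every $c\in A$. I would prove it by the direct computation $[c,ga]=c^{-1}a^{-1}g^{-1}cga$, noting that $g^{-1}cg\in A$ because $A$ is $g$-invariant, whence $a^{-1}(g^{-1}cg)a=g^{-1}cg$ by commutativity of $A$, leaving $[c,ga]=c^{-1}g^{-1}cg=[c,g]$.

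With these in hand, the base case $n=1$ is settled by a similar calculation: expanding $[g,ga]=g^{-1}a^{-1}ga$ and observing that $g^{-1}a^{-1}g\in A$ commutes with $a$, one gets $[g,ga]=a\,(g^{-1}a^{-1}g)=[a^{-1},g]$. The point of this step is that after a single Engel iteration the value has already entered $A$, which is exactly the situation in which the absorption identity applies.

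The induction then runs smoothly. Suppose $[g,{}_n ga]=[a^{-1},{}_ng]$ for some $n\ge 1$, and call this common value $c$, which lies in $A$. Applying the recursive definition of the Engel commutator together with the absorption identity gives
$$[g,{}_{n+1}ga]=[c,ga]=[c,g]=[[a^{-1},{}_ng],g]=[a^{-1},{}_{n+1}g],$$
which completes the inductive step and hence the proof.

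Since this is a short identity, I do not expect a serious obstacle; the only places to take care are the bookkeeping of inverses in the base computation and the requirement $n\ge 1$ (the statement genuinely fails at $n=0$, where $[g,{}_0 ga]=g\neq a^{-1}=[a^{-1},{}_0 g]$ in general). The one genuinely substantive idea is the absorption identity $[c,ga]=[c,g]$ for $c\in A$: it is precisely this that makes conjugation by the mixed element $ga$ agree with conjugation by $g$ on the abelian subgroup, and everything else is routine.
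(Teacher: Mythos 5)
Your proof is correct and follows essentially the same route as the paper's: establish the case $n=1$ by a direct commutator computation inside $A$, then observe that conjugation by $ga$ agrees with conjugation by $g$ on $A$ (the paper phrases your ``absorption identity'' as ``the action of $ga$ on $A$ is the same as that of $g$'') and induct. Your write-up merely makes explicit the induction that the paper compresses into ``whence the result,'' and your remark that the identity fails at $n=0$ is a correct and sensible caveat.
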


\bp
We have $[g,ga]=[ga,g]^{-1}=[a,g]^{-1}=[a^{-1},g]$ since $A$ is abelian. For the same reason, the action of $ga$ on $A$ is the same as that of $g$, whence the result.
\ep

We shall also need a couple of lemmas on direct products of nonabelian finite simple groups; the first lemma was proved in \cite{Khb}.

\begin{lemm}[{\cite[Lemma~3.8]{Khb}}]\label{l2.2} Let $S=S_{1}\times \cdots \times S_{p}$ be a direct product of $p$ isomorphic  non-abelian finite simple groups, where $p$ is a prime, and let $\varphi$ be the natural automorphism of
$S$ of order $p$ that regularly permutes the $S_{i}$. Let $n$ be a positive integer,  and let
$$
F=\langle [x,{}_{n}\varphi]:\, x\in S_{i},\;i=1,2,\dots,r\rangle.
$$
Then $F=S$.
\end{lemm}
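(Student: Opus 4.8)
The plan is to fix coordinates $S=S_1\times\cdots\times S_p$ with $\varphi$ cyclically shifting the factors, and to analyse the generators for $x$ ranging over a single factor. Writing $t^{(i)}$ for the element of $S$ with $t$ in the $i$-th coordinate and $1$ elsewhere, a direct induction on $n$ shows that $[t^{(i)},{}_n\varphi]=\prod_j (t^{e_j})^{(i+j)}$, i.e. its $(i+j)$-th coordinate is the power $t^{e_j}$ with $e_j=\sum_{k\equiv j\ (p)}(-1)^{n-k}\binom nk$; the point is that within one coordinate only powers of the single element $t$ occur, so they commute and the exponents obey the binomial recursion exactly. In particular, for $n<p$ the generator is supported on the window $\{i,i+1,\dots,i+n\}$ and carries exponent $\pm1$ at each of the two end coordinates. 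I would record at once that $F$ is $\varphi$-invariant, since $\varphi$ merely permutes the generating set.

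Next I would show that $F$ is subdirect, i.e. $\pi_i(F)=S_i$ for every $i$, where $\pi_i$ is the $i$-th projection. By the description above, $\pi_1(F)$ contains all powers $t^{e_j}$ with $t\in S_1$, hence the (normal) subgroup of $S_1$ generated by the $e_j$-th powers. This subgroup is nontrivial: the exponent vector $(e_j)_j$ is the coefficient vector of $(\sigma-1)^n$ in $\mathbb Z[\sigma]/(\sigma^p-1)$, and modulo a prime $q\ne p$ dividing $|S_1|$ the polynomial $\sigma^p-1$ is separable, so $(\sigma-1)^n\ne0$ in $\mathbb F_q[\sigma]/(\sigma^p-1)$ and some $e_j$ is nonzero mod $q$, whence some $e_j$-th power in $S_1$ is nontrivial. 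By simplicity $\pi_1(F)=S_1$, and likewise for every factor; in particular $F\ne1$.

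It then remains to put a nontrivial element of $F$ into a single factor, after which the argument closes cleanly: if $F\cap S_i\ne1$ then, $F$ being subdirect, $F\cap S_i$ is normal in $S_i$ (conjugating an element of $F\cap S_i$ by a preimage in $F$ of any $s\in S_i$ stays in $F\cap S_i$), so $F\cap S_i=S_i$ by simplicity, and $\varphi$-invariance gives $S_i\le F$ for all $i$, i.e. $F=S$. To produce such an element I would commute a generator with its $\varphi$-shifts: the coordinates of $[t^{(0)},{}_n\varphi]$ and of its shift interact only where both windows overlap, so $[\,[t^{(0)},{}_n\varphi],\,[s^{(1)},{}_n\varphi]\,]$ is supported on a strictly smaller window. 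For $n<p$ one iterates this through the window (the base case $n=1$ already gives the single-coordinate element $[\,[t^{(0)},\varphi],[s^{(1)},\varphi]\,]=[t,s^{-1}]^{(1)}$), and since an extreme coordinate always involves an exponent $\pm1$ and $[T,T]=T$, the elements can be chosen so that the surviving commutators stay nontrivial.

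The main obstacle is the case $n\ge p$, where the exponents $e_j$ wrap around and the support of a generator can be the whole index set, so the naive support-shrinking need not decrease the support. Here I would instead invoke the structure of subdirect products of the nonabelian simple group $T$: such a subgroup corresponds to a partition of the index set into equal blocks carrying full diagonals, and since $\varphi$ is a $p$-cycle with $p$ prime, the only $\varphi$-invariant partitions are into singletons (giving $F=S$) or the single full block, giving a full diagonal $\{(s,\alpha(s),\dots,\alpha^{p-1}(s)):s\in T\}$ with $\alpha\in\mathrm{Aut}(T)$. The diagonal is excluded because, fed the generators, it would force $\alpha(t^{e_m})=t^{e_{m+1}}$ for all $t$ and all $m$, i.e. $\alpha$ would act as a universal power map on $T$, which no nonabelian simple group admits. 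Making this last step uniform in $n$ — controlling the exponents $e_j$ well enough to rule out the power map — is the one genuinely technical point, and is exactly where the explicit binomial description of the generators is needed.
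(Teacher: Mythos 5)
The first thing to note is that the paper contains no proof of this lemma at all: it is imported verbatim from \cite{Khb} (Lemma~3.8 there), so your argument has to stand on its own, and I will judge it as such. Much of it is sound: the exponent-vector description of the generators (the coordinates of $[t^{(i)},{}_n\varphi]$ are governed by $(\sigma-1)^n$ in $\mathbb{Z}[\sigma]/(\sigma^p-1)$), the $\varphi$-invariance of $F$, subdirectness via separability of $\sigma^p-1$ modulo a prime $q\ne p$ dividing $|S_1|$ (such a $q$ exists since no nonabelian simple group is a $p$-group), and the reduction, via the structure of $\varphi$-invariant subdirect products and primality of $p$, to the single remaining case where $F$ is a full twisted diagonal $\{(u,\alpha(u),\dots,\alpha^{p-1}(u)):u\in T\}$. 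The genuine gap is the step you yourself flag: excluding that diagonal. Membership of the generators in the diagonal amounts to the identities $\alpha^{l}(t^{e_m})=t^{e_{m+l}}$ for all $t\in T$ and all $l,m$ (generators from all $p$ factors give all offsets $m$). To convert this into ``$\alpha^l$ is a universal power map'' you must invert some power map $t\mapsto t^{e_m}$, i.e.\ you need some $e_m$ coprime to $\exp T$. That is exactly what fails in the wrap-around regime: if $n\ge p$ and $p$ divides $|T|$ (say $T=\mathrm{Alt}_5$, $p=5$, $n\ge 5$), then $\sigma^p-1=(\sigma-1)^p$ in $\mathbb{F}_p[\sigma]$, so $(\sigma-1)^n\equiv 0$ there, every $e_j$ is divisible by $p$, no power map $t\mapsto t^{e_j}$ is bijective, and the power-map conclusion does not follow. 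So the one case your outline really needs is precisely the case where your proposed tool breaks; your separability argument only guarantees, for each prime $q\ne p$ dividing $|T|$, that \emph{some} $e_j$ is nonzero mod $q$, possibly a different $j$ for each $q$, which never produces a single exponent invertible modulo $\exp T$.

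The gap is fillable along your lines, but it needs one further idea: localize at a prime $q\ne p$ dividing $|T|$ and work with elements of order $q$ only. Since automorphisms preserve orders, $\alpha^{l}(t^{e_0})=t^{e_l}$ applied to $t$ of order $q$ gives $q\mid e_0 \Leftrightarrow q\mid e_l$; your separability argument says not all $e_j$ vanish mod $q$, hence \emph{none} does. Thus on elements of order $q$ every map $t\mapsto t^{e_j}$ is invertible, and each $\alpha^l$ acts on the set of elements of order $q$ as a power map $s\mapsto s^{c}$. Then for every $x\in T$ and every $s$ of order $q$,
$$
\alpha^l(s)^{\alpha^l(x)}=\alpha^l(s^x)=(s^x)^{c}=(s^{c})^{x}=\alpha^l(s)^{x},
$$
so $\alpha^l(x)x^{-1}$ centralizes $\alpha^l(s)$ for every such $s$; since $\alpha^l$ permutes the elements of order $q$ and these generate a nontrivial normal subgroup of $T$, i.e.\ all of $T$, we get $\alpha^l(x)=x$ for all $x$, so $\alpha=\mathrm{id}$. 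The identities then read $t^{e_m}=t^{e_{m+l}}$, so all $e_j$ are congruent mod $q$; since $\sum_j e_j=0$ (evaluate $(\sigma-1)^n$ at $\sigma=1$) and $q\ne p$, this forces every $e_j\equiv 0 \pmod q$, contradicting the previous sentence. Alternatively, you can avoid twisted diagonals altogether by observing that $F$ is normalized not only by $\varphi$ but also by the straight diagonal $C_S(\varphi)$ (conjugation by it permutes your generating set), and a full diagonal of $T^p$ normalized by the straight diagonal must itself be straight because $Z(T)=1$; the straight diagonal is then excluded by the same congruence computation. With either patch your proof closes; as written, it does not.
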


As a corollary we get the following.

\begin{lemm}\label{l2.3} Let $S=S_{1}\times \cdots \times S_{r}$ be a direct product of $r$ isomorphic  non-abelian finite simple groups, and let $\varphi$ be the natural automorphism of $S$ of order $r$ that regularly permutes the $S_{i}$. Let $n$ be a positive integer. Then $S=R_{S\langle \varphi\rangle,n}(\varphi)$.
\end{lemm}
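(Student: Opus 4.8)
The plan is to reduce the statement about right Engel values of $\varphi$ to the statement about left Engel values already handled in Lemma~\ref{l2.2}, using the abelian-by-cyclic identity of Lemma~\ref{l0}, and then to analyse the resulting subgroup as a subdirect product of the simple factors. First I would record the trivial inclusion: since $S\langle\varphi\rangle/S$ is cyclic, every commutator $[\varphi,y]$ with $y\in S\langle\varphi\rangle$ lies in $S$, hence so does every value $[\varphi,{}_{n}y]$; thus $R:=R_{S\langle\varphi\rangle,n}(\varphi)\le S$, and it remains to prove $S\le R$.

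The key step is to feed left Engel values into $R$. Fix a factor $S_i$ and an element $x\in S_i$, and set $A=\langle x^{\varphi^{j}}:0\le j<r\rangle$. Because $\varphi$ permutes the factors regularly, the cyclic groups $\langle x^{\varphi^{j}}\rangle$ lie in distinct commuting factors, so $A$ is abelian; it is plainly $\varphi$-invariant and contains $a:=x^{-1}$. Applying Lemma~\ref{l0} with $g=\varphi$ gives $[\varphi,{}_{n}\varphi x^{-1}]=[x,{}_{n}\varphi]$, and the left-hand side is a value $[\varphi,{}_{n}y]$ with $y=\varphi x^{-1}\in S\langle\varphi\rangle$, so $[x,{}_{n}\varphi]\in R$. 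Letting $x$ and $i$ vary, I obtain $M:=\langle[x,{}_{n}\varphi]:x\in S_i,\ i=1,\dots,r\rangle\le R\le S$, so it suffices to prove $M=S$. When $r$ is prime this is precisely Lemma~\ref{l2.2}.

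For general $r$ I would use the expansion $[x,{}_{n}\varphi]=\prod_{k=0}^{n}(x^{\varphi^{k}})^{(-1)^{n-k}\binom{n}{k}}$, valid because distinct factors commute, so the contribution to each single factor is a power of a conjugate of $x$. Since some coefficient is nonzero modulo the exponent of a factor and the $S_i$ are perfect, this shows $\pi_i(M)=S_i$ for every $i$, i.e. $M$ is subdirect. As $M$ is also $\varphi$-invariant (conjugation by $\varphi$ permutes the generators), any diagonal fusion of the subdirect product $M$ occurs along a $\varphi$-invariant, hence shift-invariant, partition of $\{1,\dots,r\}$, and these are exactly the partitions into residue classes modulo a divisor $d\mid r$. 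To force $d=r$, equivalently $M=S$, I would exhibit, for a suitable base point and a suitable $x$ of large prime order, two factors of one block one of which carries a nontrivial component of some $[x,{}_{n}\varphi]$ while the other carries a trivial one, which is impossible in a diagonal subgroup; here perfectness of the simple factors is used to exclude power maps as the linking isomorphisms.

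The genuinely delicate point, and the one I expect to be the main obstacle, is this fusion step when $n$ is large relative to $r$. When the support $\{i,i+1,\dots,i+n\}$ is a proper arc that cannot contain a whole block, the trivial-versus-nontrivial component contradiction is immediate; but once $n$ is large the support wraps around the cycle of factors and whole blocks may sit inside it, so one must instead compare the periodised coefficients $C_j=\sum_{k\equiv j\ (\mathrm{mod}\ r)}(-1)^{n-k}\binom{n}{k}$ attached to the factors of a single block and show they cannot be matched by one isomorphism. I anticipate that this combinatorial bookkeeping, rather than the reduction to left Engel values, is where the real work lies.
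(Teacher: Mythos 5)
Your opening reductions are sound: the inclusion $R\le S$ (where $R=R_{S\langle\varphi\rangle,n}(\varphi)$), the observation via Lemma~\ref{l0} that $[x,{}_{n}\varphi]=[\varphi,{}_{n}\varphi x^{-1}]\in R$ whenever $x$ lies in an abelian $\varphi$-invariant subgroup (in particular for $x\in S_i$, using $A=\langle x^{\langle\varphi\rangle}\rangle$), and the disposal of prime $r$ by Lemma~\ref{l2.2}. The gap is everything after that. For composite $r$ your target $M=S$, with $M=\langle [x,{}_{n}\varphi]:x\in S_{i},\ i=1,\dots,r\rangle$, is a composite-order strengthening of Lemma~\ref{l2.2}, which the paper cites without proof, and you do not establish it. Even your subdirectness step is only asserted: you need that some periodised coefficient $C_j$ is not divisible by the exponent of a factor, which requires showing that a prime $q$ divides all $C_j$ only when $r$ is a power of $q$, combined with the fact that the exponent of a nonabelian simple group has at least two prime divisors. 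The fusion step is worse: for each proper divisor $d$ of $r$ you must exclude that all generators lie in a product of twisted diagonals over the residue classes mod $d$; note for instance that if the $C_j$ satisfied $C_{j}\equiv C_{j+d}$ modulo the exponent for all $j$, then every generator $[x,{}_n\varphi]$ would lie in the proper $\varphi$-invariant subgroup $C_S(\varphi^{d})$, so excluding such congruences is genuinely arithmetic and depends on $n$, $r$ and the exponent simultaneously. You explicitly flag this as unresolved, and as it stands the proposal does not prove the lemma.

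The missing idea is that this combinatorial problem need never be faced: the paper never proves $M=S$ for composite $r$. Instead it takes $k$ so that $\varphi^{k}$ has prime order $p$, applies Lemma~\ref{l2.2} to each orbit of $\varphi^{k}$ (each of size $p$) to get $S=\langle [g,{}_{n}\varphi^{k}]:g\in S_{i},\ i=1,\dots,r\rangle$, and then works inside the abelian $\varphi$-invariant subgroup $A(g)=\langle g^{\langle\varphi\rangle}\rangle$. There the commutator identities $[b,\varphi^{i+1}]=[b,\varphi^{i}][b^{\varphi^{i}},\varphi]$ and $[bc,\varphi^{i}]=[b,\varphi^{i}][c,\varphi^{i}]$ (equivalently, the factorisation $(\varphi^{k}-1)^{n}=(\varphi-1)^{n}(1+\varphi+\cdots+\varphi^{k-1})^{n}$ in the endomorphism ring acting on $A(g)$) show that each $[g,{}_{n}\varphi^{k}]$ lies in the subgroup generated by values $[x,{}_{n}\varphi]$ with $x\in A(g)$ --- crucially, $x$ ranging over all of $A(g)$, not just over the simple factors --- and each such value lies in $R$ by Lemma~\ref{l0}. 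The step you missed is precisely this enlargement of the supply of admissible elements $x$ from $\bigcup_i S_i$ to the abelian groups $A(g)$, which lets the prime case be used as a black box and eliminates the wrap-around coefficient analysis entirely.
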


\begin{proof} Let $\varphi^{k}$ be of prime order (possibly, with $k=1$). By Lemma~\ref{l2.2} applied to each orbit of $\varphi^{k}$, we have
\begin{equation}\label{e0}
S=\langle [g,{}_{n}\varphi^{k}]:\,g\in S_{i},\;i=1,2,\ldots ,r\rangle.
\end{equation}
For any $1\leq i\leq r$ and any $g\in S_{i}$, the subgroup
$A(g)=\langle g^{\langle \varphi\rangle}\rangle$ is abelian and $\varphi$-invariant. Using this fact and the commutator formulae $[b,\varphi^{i+1}]=[b,\varphi^{i}][b^{\varphi^{i}},\varphi]$ and $[bc,\varphi^{i}]=[b,\varphi^{i}][c,\varphi^{i}]$ that hold for all $b,c\in A(g)$, we see that any commutator $[g,{}_{n}\varphi^{k}]$ belongs to the subgroup  generated by commutators of the form $[x,{}_{n}\varphi]$ with
$x\in A(g)$. By Lemma~\ref{l0} we also have
$[x,{}_{n}\varphi]=[\varphi, {}_n\varphi x^{-1}]\in R_{S\langle \varphi\rangle,n}(\varphi)$. Combined with \eqref{e0}, this proves that $S=R_{S\langle \varphi\rangle,n}(\varphi)$.
\end{proof}

\section{Fitting height}

In this section we prove Theorem~\ref{t1.1} on finite soluble groups.

\begin{prop}\label{p2.1}
Let $\alpha $ be an automorphism of a finite soluble group $G$ such that $G=[G,\alpha]$.  Then $G=R_{G\langle\alpha\rangle,n}(\alpha)$
for all $n\geq 1$.
\end{prop}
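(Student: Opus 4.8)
The plan is to induct on $|G|$, the base $G=1$ being trivial. Write $R=R_{G\langle\alpha\rangle,n}(\alpha)$ and $H=G\langle\alpha\rangle$. First I would note that $R\le G$: since $G\trianglelefteq H$ with cyclic quotient, $\alpha$ is central modulo $G$, so $[\alpha,x]\in G$ and hence $[\alpha,{}_n x]\in G$ for every $x\in H$. (For $n=1$ this already gives $R=\langle[\alpha,x]:x\in G\rangle=[G,\alpha]=G$; the induction below is uniform in $n$.)

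For the inductive step I would pick a minimal $\alpha$-invariant normal subgroup $V$ of $G$, which is an elementary abelian $p$-group as $G$ is soluble. In $\bar G=G/V$ we have $[\bar G,\bar\alpha]=\bar G$, and by the quotient property of right Engel-type subgroups recorded in Section~2 the image of $R$ is $R_{\bar G\langle\bar\alpha\rangle,n}(\bar\alpha)$, which equals $\bar G$ by induction. Hence $RV=G$, and the whole statement reduces to showing $V\le R$.

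Two facts drive this. (i) Applying Lemma~\ref{l0} with the abelian $\alpha$-invariant group $A=V$: for $v\in V$ the element $\alpha v$ lies in $H$ and $[\alpha,{}_n\alpha v]=[v^{-1},{}_n\alpha]\in R$; as $v$ ranges over $V$ these generate $[V,{}_n\alpha]$, so $[V,{}_n\alpha]\le R$. Thus $\alpha-1$ acts nilpotently on $\bar V=V/(R\cap V)$. (ii) If some $\alpha$-invariant subgroup $C\le C_G(\alpha)$ satisfies $RC=G$, then, since $C_G(\alpha)$ normalizes $R$, the product $RC=G$ is a subgroup and $R\trianglelefteq G$; as $\alpha$ centralizes $C$ it centralizes $G/R$, so $[G/R,\alpha]=1$, while $[G/R,\alpha]=\overline{[G,\alpha]}=G/R$, forcing $R=G$. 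Mechanism (ii) disposes of the case $V\le C_G(\alpha)$ (take $C=V$) and of the coprime case $p\nmid|\alpha|$: there $V=[V,\alpha]\oplus C_V(\alpha)$ with $[V,\alpha]=[V,{}_n\alpha]\le R$, so $C=C_V(\alpha)$ gives $RC\supseteq\langle[V,\alpha],C_V(\alpha)\rangle=V$ and hence $RC\supseteq RV=G$. When instead $V\le Z(G)$, then $V$ normalizes $R$, so again $R\trianglelefteq G$ with $G/R\cong\bar V$; combining $[G/R,\alpha]=G/R$ with the nilpotency of $\alpha-1$ on $\bar V$ from (i) gives $\bar V=(\alpha-1)\bar V=\cdots=(\alpha-1)^n\bar V=0$, that is $V\le R$.

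The \textbf{main obstacle} is the remaining non-coprime case, in which $p\mid|\alpha|$ and the $p$-part $\beta$ of $\alpha$ acts on $V$ as a nonsemisimple unipotent operator while $V\not\le Z(G)$: then $C_V(\alpha)$ is too small for mechanism (ii), and $V$ need not normalize $R$, so the normality $R\trianglelefteq G$ underlying (ii) is not immediate. To attack it I would first arrange $V\le Z(F(G))$—possible since $Z(F(G))$ is a nontrivial $\alpha$-invariant normal subgroup and hence contains a minimal one—so that $F(G)$ centralizes $V$ and $G/C_G(V)$ embeds in $GL(V)$. The task then becomes to establish $R\trianglelefteq G$ (equivalently $RC_G(\alpha)=G$) in this configuration, after which the nilpotency argument of (i)–(ii) closes the proof exactly as in the central case. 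Proving this normality when $Z(G)=1$, by controlling the faithful unipotent action of $\beta$ on the chief factor $V\le Z(F(G))$, is the delicate heart of the argument.
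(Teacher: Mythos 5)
Your overall architecture (induction on $|G|$, a minimal $\alpha$-invariant normal subgroup $V$, the application of Lemma~\ref{l0} to get $[V,{}_n\alpha]\le R$, and the ``centralizer of $\alpha$ normalizes $R$'' mechanism) is exactly the paper's, and your coprime and central cases are correct. But the proof is incomplete: you explicitly leave open the non-coprime, non-central case, and the route you sketch for it (arranging $V\le Z(F(G))$, controlling a faithful unipotent action inside $GL(V)$) is both unproved and unnecessary. The single observation you are missing is that $V\cap R$ is normal in $G$: indeed $V$ is abelian so it centralizes $V\cap R$, while $R$ normalizes both itself and the normal subgroup $V$, so $V\cap R$ is normalized by $VR=G$; it is also $\alpha$-invariant (as $V$ and $R$ are). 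By minimality of $V$, either $V\le R$ --- and you are done --- or $V\cap R=1$. In the latter case the correct dichotomy is on $C_V(\alpha)$, with no reference to whether $p$ divides $|\alpha|$. If $C_V(\alpha)\ne 1$: since $C_V(\alpha)$ normalizes $R$, we get $[C_V(\alpha),R]\le V\cap R=1$, so $C_V(\alpha)$ centralizes both $R$ and $V$, hence lies in $Z(G)$; being a nontrivial $\alpha$-invariant normal subgroup contained in $V$, it equals $V$ by minimality, so $V\le Z(G)\cap C_G(\alpha)$, and then for $v\in V$, $r\in R$ we have $[vr,\alpha]=[v,\alpha]^r[r,\alpha]=[r,\alpha]\in R$ (recall $R$ is $\alpha$-invariant), giving $G=[G,\alpha]\le R$, a contradiction. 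If $C_V(\alpha)=1$: the map $v\mapsto[v,\alpha]$ is an injective, hence surjective, endomorphism of the finite abelian group $V$, so every element of $V$ has the form $[v,{}_n\alpha]$; by your own point (i) all such elements lie in $R$, so $V\le R$, contradicting $V\cap R=1$.

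So the gap is genuine but localized: the ``delicate heart'' you describe evaporates once you notice $V\cap R\trianglelefteq G$. Your case split by coprimality of $p$ and $|\alpha|$ is a detour --- the relevant dichotomy is $C_V(\alpha)=1$ versus $C_V(\alpha)\ne 1$, and both horns close in a couple of lines. The paper's proof is precisely this argument, phrased as a minimal counterexample rather than an induction.
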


\begin{proof} Let $R=R_{G\langle\alpha\rangle,n}(\alpha)$. We argue by contradiction and let $G$ be a counterexample of minimal order. Let $M$ be a minimal normal $\alpha$-invariant subgroup of $G$ (and thus an elementary abelian $p$-group for some prime $p$). By the minimality of $G$ we have $G/M=R_{G\langle\alpha\rangle/M,n}(\bar{\alpha})=MR/M$ (where $\bar {\alpha}$ is the automorphism of $G/M$ induced by $\alpha$), and thus $G=MR$. Since $G\not =R$, we
must have $M\not\leq R$ and therefore, $M\cap R\ne M$. Since $M$ is abelian and normal, it follows that $M\cap R$ is normal in $MR=G$. Then by the minimality of $M$ we must have $M\cap R=1$. Thus,
$$
G=MR\qquad \text{and}\qquad M\cap R=1.
$$

We claim that $C_{M}(\alpha)=1$. If this is not the case, then, as $C_{M}(\alpha)$ normalizes $R$, we have $[C_{M}(\alpha),R]\leq M\cap R=1$, so that  $C_{M}(\alpha)$ centralizes $R$ as well as $M$. Hence  $C_{M}(\alpha)$ is an $\alpha$-invariant subgroup contained in the centre of $G=MR$, and by the minimality of $M$ it follows that
$C_{M}(\alpha)=M$. But then $G=[G,\alpha ]=[R,\alpha]\leq R$, a contradiction.

Thus, $C_{M}(\alpha)=1$, which implies that the mapping $M\rightarrow M$ given by $m\mapsto [m,\alpha]$ is injective. Since $M$ is
finite, it follows that this mapping is surjective, and therefore every element in $M$ is of the form $[m,{}_{n}\alpha]$ for
some $m\in M$. Hence, using Lemma~\ref{l0}, we obtain
$$
 M=\langle [m,{}_{n}\alpha]:\,m\in M\rangle =\langle [\alpha,{}_{n}\alpha m]:\,m\in M\rangle \leq R.
$$
Then $G=MR=R$, a contradiction.
\end{proof}

\begin{proof}[Proof of Theorem~\ref{t1.1}]
Let $g$ be an element of a finite soluble group $G$, and $n$ a positive integer. Suppose that the Fitting height of
$R_{n}(g)$ is equal to $k$. We want to show that $g$ then belongs to $F_{k+1}(G)$.

Consider the subnormal series
$$
G\geq [G,g]\geq [[G,g],g]\geq \cdots \geq [...[[G,\underbrace{g],g],\dots ,g}_{m}]=H,
$$
where $[H,g]=H$ (can also be trivial). Let $N=\langle H^{G}\rangle$ be the normal closure of $H$. Since $gN$ is a left Engel element in $G/N$, we know from Baer's theorem that   $gN\in F(G/N)$. Hence it
suffices to show that $N$ is of Fitting height at most $k$. Since $H$ is subnormal in $G$, by Lemma~\ref{l-sn}(b) it suffices to show
that $H$ has Fitting height at most $k$. Since $H=[H,g]$, we know  from Proposition~\ref{p2.1} that $H=R_{H\langle g\rangle,n}(g)$. We have $R_{H\langle g\rangle,n}(g)\leq  R_{n}(g)$, and  by hypothesis $R_{n}(g)$ has Fitting height $k$. Therefore $H$ has Fitting height at most $k$, as required.
\end{proof}

\section{Nonsoluble length}

In this section we prove Theorem~\ref{t1.3} on nonsoluble length.
First we introduce some notation. Suppose that the nonsoluble length
of a finite group $G$ is $\lambda(G)=\lambda$. This means that we
have the upper nonsoluble series
\begin{equation}\label{e1}
1=B_{0}\leq D_{0}<B_{1}\leq D_{1}\cdots
	<B_{\lambda}\leq D_{\lambda}=G.
\end{equation}
Recall that each factor $D_{j}/B_{j}$ is the soluble radical (possibly, trivial) of $G/B_{j}$. We fix the notation $U_{j}=B_{j}/D_{j-1}$ for the other factors, each of which is the (nontrivial) direct product of all subnormal nonabelian simple subgroups of $G/D_{j-1}$, say,  $U_j=S_{1}\times \dots \times S_{v}$ (to lighten the notation, we do not use double indices for $S_i$ here, but the groups and the number of factors may of course be different for different $j$). Acting by conjugation the group $G$ permutes these subnormal factors $S_i$; for brevity we speak of \textit{orbits of elements of $G$ on $U_j$} meaning orbits in this permutational action. The stabilizer of a point $S_i$ can also be denoted as the normalizer $N_G(S_i)$ of the section $S_i$. Let $K_{j}$ be the kernel of the permutational action of $G$ on $\{S_{1},\ldots ,S_{v}\}$. Clearly, $B_{j}\leq K_{j}$, and since $K_{j}/B_{j}$ is soluble by the Schreier conjecture, we have $K_{j}\leq D_{j}$.

The following proposition is the main tool in the proof of Theorem~\ref{t1.3}; it is a right Engel analogue of Proposition~5.4 in \cite{Khb}.

\begin{prop}\label{p2.4}
Let $g$ be an element of a finite group $G$ whose order $|g|$ is a product of $m>1$ primes. Suppose that $\langle g\rangle\cap K_{ms}=1$ for some positive integer $s$. Then for any positive integer $n$ the nonsoluble length of $R_{n}(g)$ is at least $s$.
\end{prop}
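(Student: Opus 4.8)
The plan is to argue by induction on $s$, with $m$ fixed and the case $s=0$ vacuous, after first recording two structural reductions. The key preliminary observation is that the kernels are nested: since $D_{j-1}\le K_j\le D_j\le B_{j+1}\le K_{j+1}$, we have $K_1\le K_2\le\cdots$, so the hypothesis $\langle g\rangle\cap K_{ms}=1$ already forces $\langle g\rangle\cap K_j=1$ for \emph{every} $j\le ms$. Equivalently, $g$ acts faithfully by permutations on the set of simple factors of each layer $U_j$ with $j\le ms$, and in particular it has a nontrivial orbit on each such layer. I would also use throughout that the image of $R_n(g)$ in any $g$-invariant quotient $G/N$ is the corresponding right Engel subgroup $R_n(\bar g)$, and that Lemma~\ref{l-sn} lets me move freely between a subnormal subgroup, its normal closure, and the ambient group without changing nonsoluble length.

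The engine of the proof is a single-layer production statement: whenever $g$ permutes some of the simple factors of a layer in a nontrivial cycle, $R_n(g)$ contains a nonsoluble section supported on that layer. This is where Lemmas~\ref{l2.2}, \ref{l2.3} and~\ref{l0} do the work. On a nontrivial orbit I would choose a prime $p$ dividing the orbit length and pass to a power of $g$ of order $p$, which permutes $p$ of the factors regularly; Lemma~\ref{l2.2} shows that the right Engel values of that power on those factors generate their full product, and the abelian-subgroup trick of Lemma~\ref{l0}, used exactly as in the proof of Lemma~\ref{l2.3}, converts these into right Engel values $[g,{}_n x]$ of $g$ itself, so that a genuine semisimple chunk lands inside $R_n(g)$. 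The same reduction to a prime-order power, combined with Lemma~\ref{l0}, is what straightens out the diagonal twisting that $g$ carries on the orbit-product (its action being a natural permutation composed with automorphisms of the individual factors).

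For the inductive step I would group the $ms$ faithful layers into $s$ consecutive blocks of $m$ layers each, and show that each block contributes one nonsoluble section of $R_n(g)$ lying strictly above those produced by the lower blocks. Passing to the quotient by the soluble radical sitting below the top block, the induction hypothesis (with the same $m$ and with $s-1$, valid because $\langle g\rangle\cap K_{m(s-1)}=1$ still holds) supplies nonsoluble length $s-1$ from below, while the engine above supplies one further unit; Lemma~\ref{l-sn} reconciles the two across the relevant subnormal sections. The factor $m$ enters here: because $|g|$ is a product of $m$ primes, a strictly descending chain of subgroups of the cyclic group $\langle g\rangle$ has length at most $m$, and I would use this to guarantee that a full block of $m$ faithful layers cannot be absorbed into the lower sections and must yield a nonsoluble section that survives at a strictly higher level of the intrinsic upper nonsoluble series of $R_n(g)$.

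The hard part will be making this stacking rigorous, and it is precisely where the factor $m$ is consumed. Two points need care. First, the freshly extracted semisimple chunk must occupy a strictly higher level of the \emph{intrinsic} upper nonsoluble series of $R_n(g)$ than the lower $s-1$ chunks, rather than collapsing into them; I would control this through the normal series of $R_n(g)$ induced from \eqref{e1}, tracking where the prime-order permutation witnessing each section can be found as one descends, and using the bounded-length subgroup chain of $\langle g\rangle$ to ensure one such witness per block. Second, since the $S_i$ are subnormal simple \emph{sections} rather than honest subgroups, every commutator computation must be carried out in the quotients $G/D_{j-1}$ and then lifted, and the images of the right Engel subgroups must be matched across these quotients; this bookkeeping, together with the control of the diagonal twisting in the engine, is the principal technical burden of the argument.
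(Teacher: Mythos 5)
Your plan has two genuine gaps, and the first is fatal to the ``engine'' on which everything else rests. You claim that on \emph{any} nontrivial $g$-orbit one can pass to a power of $g$ of prime order which ``permutes $p$ of the factors regularly'' and then invoke Lemma~\ref{l2.2}, the prime-order reduction allegedly straightening out the diagonal twisting. It does not. Lemma~\ref{l2.2} requires the automorphism to be the \emph{natural} one of order exactly $p$, with no twisting of the individual factors, and taking powers cannot produce this from a non-pure orbit. Concretely: let $g$ have order $4$ and act on $S_1\times S_2$ by $(a,b)\mapsto (\beta(b),a)$ where $\beta$ is an automorphism of order $2$; then the orbit $\{S_1,S_2\}$ is non-pure, and the \emph{only} prime-order power of $g$, namely $g^2=(a,b)\mapsto(\beta(a),\beta(b))$, induces the trivial permutation of the factor set, so Lemma~\ref{l2.2} applies to no power of $g$ at all, and there is no reason why $R_n(g)$ should cover $S_1\times S_2$. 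This is exactly why the paper's argument is built on the dichotomy of Hypothesis~\ref{h3.1}: a semisimple chunk is extracted (via Lemma~\ref{l2.3}) only at \emph{pure} orbits, while a non-pure orbit is handled by a completely different mechanism, Proposition~\ref{p3.3}, which descends to an orbit in the next layer down that is strictly longer in the order $\prec$. Moreover, this is where the parameter $m$ is actually consumed: since $|g|$ is a product of $m$ primes, at most $m-1$ non-pure passages can occur, so a descent through $ms$ layers must contain $s$ consecutive pure orbits. Your alternative use of $m$ (blocks of $m$ layers, chains of subgroups of $\langle g\rangle$) does not replace this count.

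The second gap is in the stacking, which you rightly call the hard part but for which your sketch lacks the one idea that makes it work. Knowing that $R_n(g)$ contains semisimple chunks covering sections in $s$ different layers of the series \eqref{e1} of $G$ does \emph{not} bound the intrinsic nonsoluble length of $R_n(g)$ below by $s$: two such chunks could centralize each other and then occupy the \emph{same} level of the intrinsic series \eqref{e2}, as happens in a direct product. Your preliminary observation that $\langle g\rangle\cap K_j=1$ for all $j\le ms$ (which is correct, by the nesting $K_j\le D_j\le B_{j+1}\le K_{j+1}$) only yields an unrelated nontrivial orbit on each layer, and unrelated orbits are precisely what cannot be stacked. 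What forces strict descent of the marked levels in the paper is an \emph{interaction} between consecutive chunks: Proposition~\ref{p3.2} produces, inside the upper chunk, an element $x_0\in\hat S_1\setminus D_{i-1}$ that moves a simple factor $T_1$ of the pure orbit in the layer below ($T_1^{x_0}\ne T_1$), and Lemma~\ref{l3.4} turns this non-normalization, together with the perfectness $\hat S_1=[\hat S_1,\hat S_1]$ and the solubility (via the Schreier conjecture) of $D_{i-1}/B_{i-1}$ and of the permutation kernels, into the strict inequality $j>u$ between the levels of \eqref{e2} marked by the two orbits. Nothing in your proposal creates this linkage between the orbits chosen in consecutive layers, so the induction on $s$ you describe cannot be carried out as stated.
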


Before embarking on the proof of this proposition, we introduce some definitions.
Let $g$ be an element of $G$, and let $\{ S_{{1}} ,\ldots ,S_{{r}}\}$ be a $g$-orbit on $U_i$, that is, on the set of simple factors the product of which is $U_i$. As in \cite{Khb},  we
say that such an orbit is \textit{pure} if the order of the automorphism of $S=S_{{1}}\times\cdots\times S_{{r}}$
induced by $g$ acting by conjugation is equal to $r$, which is the order of the permutation induced by $g$ on this orbit.
 In other words, the orbit is pure if the stabilizer of
a point in $\langle g\rangle$ acts trivially on $S$, that is,  $N_{\langle g\rangle}(S_{1})=C_{\langle g\rangle}(S)$. We introduce the following hypothesis, which is the same as Hypothesis~5.1 in \cite{Khb}.

\begin{hypo}\label{h3.1}
Let $g$ be an element of a finite group $G$, and let $\{S_{1},\ldots ,S_{r}\}$ be a $g$-orbit in a section $U_{i}=B_{i}/D_{i-1}$ of the series \eqref{e1} with $i\geq 2$. Suppose that $\langle g\rangle\cap B_{i}=1$, and that $g$ acting by conjugation induces a nontrivial automorphism $\bar{g}$ of $S=S_{1}\cdots S_{r}$. Let $t=|\bar{g}|$, so that $\langle g^{t}\rangle$ is the
centralizer of $S$ in $\langle g\rangle$ and $\langle \bar{g}\rangle =\langle g\rangle/\langle g^{t}\rangle$.

Furthermore, let $\hat{S}$ be a minimal by inclusion $g$-invariant subgroup of $G$ such that $\hat{S}D_{i-1}/D_{i-1}=S$. If
in addition $\{S_{1},\ldots ,S_{r}\}$ is a pure $g$-orbit, then we choose $\hat{S}$ to be contained in $R_{n}(g)$, which is possible by Lemma~\ref{l2.3}. Let $\hat{S}_{1}$ be a minimal by inclusion subgroup of $\hat{S}$ such that $\hat{S}_{1}D_{i-1}/D_{i-1}=S_{1}$.
\end{hypo}

We use two technical propositions from \cite{Khb}. The first one provides a passage from a pure orbit of $g$ on $U_{i}$ to an orbit of at least the same length on the preceding section $U_{i-1}$.

\begin{prop}[{\cite[Proposition 5.2]{Khb}}]\label{p3.2}
Assume Hypothesis~\ref{h3.1}. If $\{S_{1},\ldots ,S_{r}\}$ is a pure $g$-orbit, then $U_{i-1}$ contains a $g$-orbit $\{T_{1},\ldots ,T_{l}\}$ of length divisible by $r=|\bar{g}|$, and for some choice of $\hat{S}_{1}$ there is an element $x_{0}\in \hat{S}_{1}\setminus D_{i-1}$ that does not belong to $N_{G}(T_{1})$.
\end{prop}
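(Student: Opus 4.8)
The plan is to descend one nonsoluble layer by analysing the conjugation action of $G$ on the set $\Omega$ of simple factors of $U_{i-1}$, and to locate inside this action both the required $g$-orbit and the witnessing element $x_0$. Throughout I would work in $\bar G=G/D_{i-2}$, whose soluble radical is trivial and whose socle is precisely $U_{i-1}$, a product of nonabelian simple groups.

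The first step is a centralizer computation. Since $\bar G$ has trivial soluble radical, its socle $U_{i-1}$ is a product of nonabelian simple groups and is therefore self-centralizing, $C_{\bar G}(U_{i-1})=1$, so that $C_G(U_{i-1})=D_{i-2}$. Let $P=\bigcap_{T}N_G(T)$ be the kernel of the permutation action of $G$ on $\Omega$. An element of $P$ normalizes every factor $T$ and so induces an automorphism of each $T$; modulo the inner automorphisms (that is, modulo $U_{i-1}$) the outer parts are soluble by the Schreier conjecture, while the elements inducing only inner automorphisms on every factor lie in $U_{i-1}\cdot C_{\bar G}(U_{i-1})=U_{i-1}$ by the previous computation. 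Hence $P/B_{i-1}$ is soluble, and being normal in $G/B_{i-1}$ it lies in the soluble radical $D_{i-1}/B_{i-1}$; that is, $P\le D_{i-1}$.

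This already yields the witnessing element. Because $\hat S_1D_{i-1}/D_{i-1}=S_1\ne 1$ we have $\hat S_1\not\le D_{i-1}\supseteq P$, so every $x_0\in\hat S_1\setminus D_{i-1}$ lies outside $P$ and therefore moves some factor of $U_{i-1}$; the same holds for each conjugate $\hat S_j=\hat S_1^{g^{j-1}}$, whose support $\Lambda_j\subseteq\Omega$ satisfies $\Lambda_j=\Lambda_1^{g^{j-1}}$, since $g$ cyclically permutes the $\hat S_j$ (purity providing the honest wreath structure $S\langle\bar g\rangle\cong S_1\wr C_r$). I would then note that any factor $T\in\Lambda_1$ is in fact moved by some element of $\hat S_1\setminus D_{i-1}$: otherwise $\hat S_1\setminus D_{i-1}\subseteq N_G(T)$, and as these elements generate $\hat S_1$ we would obtain $\hat S_1\le N_G(T)$, contradicting $T\in\Lambda_1$. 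Thus for a suitable representative $T_1$ part (b) will follow.

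It remains to produce the $g$-orbit of length divisible by $r$ with a representative lying in $\Lambda_1$, and this is the step I expect to be the main obstacle. Writing $N=\langle g^r\rangle=C_{\langle g\rangle}(S)$, a $g$-orbit on $\Omega$ has length divisible by $r$ exactly when it consists of $r$ distinct $N$-orbits cyclically permuted by $\langle g\rangle/N\cong C_r$; equivalently I need a factor $T_1$ whose translates $T_1,T_1^{g},\dots,T_1^{g^{r-1}}$ lie in pairwise distinct $N$-orbits. The cleanest sufficient condition is that $T_1$ be moved by $\hat S_1$ but by none of $\hat S_2,\dots,\hat S_r$, for then $T_1^{g^{j}}\in\Lambda_{1+j}$ forces $r\mid l$; the difficulty is that the supports $\Lambda_j$ of the commuting components need not be disjoint. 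The plan is to exploit the wreath structure furnished by purity: the base $S=S_1\times\dots\times S_r$ acts on $\Omega$ with $S_j$ ``attached'' to $\Lambda_j$ and $\bar g$ shifting these data cyclically, so that a minimal choice of $\hat S_1$ as in Hypothesis~\ref{h3.1} should localize the action of $\hat S_1$ to a part of $\Omega$ whose $g$-translates are genuinely separated, giving an $N$-orbit with trivial $C_r$-stabilizer. Ruling out the degenerate overlaps and establishing this separation rigorously is the technical heart of the proposition.
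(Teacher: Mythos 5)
Note first that the paper you are working from contains no proof of this statement: Proposition~\ref{p3.2} is imported verbatim from \cite[Proposition~5.2]{Khb}, where its proof is the technical core of that paper, so your attempt has to be measured against that argument. Your preliminary steps are correct but routine, and the paper itself records them: the kernel of the permutation action of $G$ on the simple factors of $U_{i-1}$ lies between $B_{i-1}$ and $D_{i-1}$ by the Schreier conjecture (this is exactly the remark $K_j\le D_j$ in Section~4), hence every element of $\hat S_1\setminus D_{i-1}$ moves some factor, and every factor in the support of $\hat S_1$ is moved by such an element because $\hat S_1\setminus D_{i-1}$ generates $\hat S_1$. The entire mathematical content of the proposition is the step you explicitly leave open, namely producing a factor that is moved by (a suitable choice of) $\hat S_1$ \emph{and} whose $g$-orbit has length divisible by $r$; a proposal that defers ``the technical heart'' is not a proof.

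Moreover, the route you sketch for that step is flawed in two concrete ways. First, the assertion $\Lambda_j=\Lambda_1^{g^{j-1}}$ ``since $g$ cyclically permutes the $\hat S_j$'' is false: purity says only that $g^r$ centralizes the \emph{section} $S=\hat S D_{i-1}/D_{i-1}$, not that $g^r$ normalizes $\hat S_1$, so $\hat S_1^{g^r}$ is in general a different minimal lift of $S_1$ and the family of supports is not $r$-periodic. Consequently your ``cleanest sufficient condition'' is not sufficient as stated: from $T_1^{g^j}=T_1$ with $r\nmid j$ one can only conclude that $T_1$ is moved by the lift $\hat S_1^{g^j}$ of $S_{1+(j\bmod r)}$, which need not be any of $\hat S_2,\dots,\hat S_r$. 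Second, after the natural repair (let $\Lambda_j^{*}$ be the union of the supports of \emph{all} minimal lifts of $S_j$ inside $\hat S$, which is $r$-periodically permuted by $g$), the separation condition becomes circular: if $T\in\Lambda_1^{*}$ lies in a $g$-orbit of length $l$ with $r\nmid l$, then, since $l$ and $r$ divide $|g|$, we get $g^{l}\notin\langle g^{r}\rangle=N_{\langle g\rangle}(S_1)$ by purity, while $g^{l}$ fixes $T$; hence $\hat S_1^{g^{l}}$ is a minimal lift of some $S_j$ with $j\ne 1$ that also moves $T$, so $T\in\Lambda_j^{*}$. Thus $\Lambda_1^{*}\setminus\bigcup_{j\ne 1}\Lambda_j^{*}$ is automatically contained in the set of factors whose orbit length is divisible by $r$: proving it nonempty presupposes exactly the conclusion you want, and the condition is in fact strictly stronger than the proposition (lifts of $S_1$ and of $S_2$ may both move the good factor $T_1$), so it can fail even when the proposition holds. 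This is why the argument in \cite{Khb} runs in the opposite order --- an orbit of length divisible by $r$ is located first, and a suitable $\hat S_1$ and $x_0$ are constructed afterwards --- which is precisely why the statement reads ``for some choice of $\hat S_1$''.
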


The second technical proposition from \cite{Khb} provides a passage from a non-pure orbit of $g$ in $U_{i}$ to an orbit in $U_{i-1}$ that is strictly greater with respect to the following ordering. Namely, on the set of positive integers we introduce the lexicographical order with respect to the exponents of primes in the canonical prime-power decomposition: if $a=2^{i_{2}}3^{i_{3}}5^{i_{5}}\cdots $ and $b=2^{j_{2}}3^{j_{3}}5^{j_{5}}\cdots$, then by definition $a\prec b$ if, for some prime $p$, we have $i_{q}=j_{q}$ for
all primes $q<p$ and $i_{p}<j_{p}$. In particular,  if $v$ is a proper divisor of $u$,  then $v\prec u$.

\begin{prop}[{\cite[Proposition 5.3]{Khb}}]\label{p3.3} Assume Hypothesis 3.1. If $\{S_{1},\ldots ,S_{r}\}$ is a non-pure $g$-orbit, then $U_{i-1}$ contains a $g$-orbit $\{T_{1},\ldots ,T_{l}\}$ of length strictly greater than $r$ with respect to the order $\prec$.
\end{prop}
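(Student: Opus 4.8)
The plan is to run the same descent from $U_i$ to $U_{i-1}$ as in the pure case (Proposition~\ref{p3.2}), but to feed in the extra automorphism coming from non-purity so as to upgrade the conclusion from ``length divisible by $r$'' to ``length $\succ r$''. I work in $\bar G=G/D_{i-2}$, where $U_{i-1}=B_{i-1}/D_{i-2}$ is the product of the subnormal nonabelian simple factors of $\bar G$ and $D_{i-1}/B_{i-1}$ is soluble. Since the orbit is non-pure we have $t=|\bar g|>r$; the permutation induced by $g$ on the $r$ factors is a single $r$-cycle, so $r\mid t$ and the stabilizer $N_{\langle g\rangle}(S_1)=\langle g^r\rangle$ induces on $S_1$ a nontrivial automorphism of order $c=t/r>1$. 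Fixing a prime $q\mid c$, the element $g^{t/q}$ lies in $\langle g^r\rangle$ (because $t/q=r\cdot(c/q)$ is a multiple of $r$), stabilizes every $S_j$, and induces on $S_1$ an automorphism of order exactly $q$.

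The first step is to produce, exactly as one does for Proposition~\ref{p3.2} and \emph{without} using purity, a factor $T$ of $U_{i-1}$ together with an element of $\hat S_1\setminus D_{i-1}$ that does not normalize $T$; equivalently, $\hat S_1$ acts nontrivially on $U_{i-1}$. That $\hat S_1$ cannot centralize $U_{i-1}$ is forced by the canonical construction of the upper nonsoluble series from iterated soluble radicals: if $\hat S$ centralized $U_{i-1}$---and, after a coprime reduction, the whole of $D_{i-1}/D_{i-2}$---then the simple section $S$ would already be subnormal in $\bar G$ over $D_{i-2}$ and would lie in $U_{i-1}$ rather than one level higher, contrary to $S\le U_i$. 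Since $S_1$ is simple, the kernel of its action on $U_{i-1}$ is a proper normal subgroup and hence trivial, so $S_1$ acts \emph{faithfully} on $U_{i-1}$.

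With such a $T$ fixed, the $g$-orbit of $T$ in $U_{i-1}$ has length $l=[\langle g\rangle:N_{\langle g\rangle}(T)]$. Because the lifts $\hat S_1,\dots,\hat S_r$ are cyclically permuted by $g$ and act on disjoint sets of factors of $U_{i-1}$, any power of $g$ fixing $T$ must fix $S_1$, so $N_{\langle g\rangle}(T)\le N_{\langle g\rangle}(S_1)=\langle g^r\rangle$; as in the pure case this already gives $r\mid l$. The non-purity now supplies a proper refinement: the element $g^{t/q}$ lies in $\langle g^r\rangle$, hence fixes $S_1$, yet---because $S_1$ acts faithfully on $U_{i-1}$ and $g^{t/q}$ induces a nontrivial automorphism of $S_1$---it does not normalize $T$. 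Thus $N_{\langle g\rangle}(T)$ is a proper subgroup of $\langle g^r\rangle$, so $l$ is a proper multiple of $r$; and a proper multiple of $r$ dominates $r$ at every prime and strictly exceeds it at some prime, hence $l\succ r$ in the order $\prec$, which is exactly the assertion.

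The decisive point, and the one I expect to be the hardest, is the implication that $g^{t/q}$ fails to normalize $T$. A nontrivial automorphism of the simple group $S_1$ need not, a priori, disturb any individual factor $T$ of $U_{i-1}$: it might permute the factors attached to $S_1$ in a pattern already counted by the $r$-cycle, or act on them as an inner automorphism matching conjugation by some element of $\hat S_1$. Excluding these possibilities is where the faithful action of $S_1$ on $U_{i-1}$ must be used quantitatively, together with a careful comparison of the nested subgroups $N_{\langle g\rangle}(T)\le\langle g^r\rangle=N_{\langle g\rangle}(S_1)$ and $C_{\langle g\rangle}(S)=\langle g^t\rangle$, to certify that the prime $q\mid t/r$ genuinely survives into the length $l$. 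Establishing this refinement for a suitable choice of $T$ (and of the lift $\hat S_1$) is the technical core of the proposition.
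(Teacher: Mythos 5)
For the record: the paper itself gives no proof of this proposition --- it is imported verbatim from \cite[Proposition~5.3]{Khb} --- so there is no internal argument to compare yours against, and your attempt must stand on its own. It does not: it is a programme rather than a proof, and the incomplete places are exactly the load-bearing ones. The first gap is in your opening step. Producing an element of $\hat{S}_{1}\setminus D_{i-1}$ that fails to \emph{normalize} some factor $T$ of $U_{i-1}$ is a statement about the \emph{permutation} action of $\hat{S}_{1}$ on the set of factors, and it is strictly stronger than what you call its equivalent, namely that $\hat{S}_{1}$ acts nontrivially (even faithfully) on $U_{i-1}$: an element can induce a nontrivial automorphism of $U_{i-1}$ while normalizing every single factor. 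The mechanism that actually yields such an element is the Schreier conjecture, as the paper itself signals when it notes that $K_{j}\leq D_{j}$ because $K_{j}/B_{j}$ is soluble: if the image of $\hat{S}_{1}$ normalized every factor of $U_{i-1}$, it would lie in the kernel $K_{i-1}\leq D_{i-1}$ of the permutation action, contradicting $\hat{S}_{1}D_{i-1}/D_{i-1}=S_{1}\neq 1$. Your centralizer-of-the-socle argument gives faithfulness of the action, but faithfulness alone cannot produce the element $x_{0}$.

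The second gap is the containment $N_{\langle g\rangle}(T)\leq\langle g^{r}\rangle$, which is what makes $l$ a multiple of $r$. You derive it from the assertion that the lifts $\hat{S}_{1},\ldots,\hat{S}_{r}$ act on \emph{disjoint} sets of factors of $U_{i-1}$. That assertion is unjustified and false in general: two commuting simple sections at level $i$ can move exactly the same factors below (product actions of $S_{1}\times S_{2}$ on a grid of factors give examples), and moreover $g^{r}$ normalizes $S_{1}$ but need not normalize the chosen lift $\hat{S}_{1}$, so even ``cyclically permuted by $g$'' is unreliable at the level of lifts. The third gap you concede yourself: the \emph{properness} of the containment, i.e. that $g^{t/q}$ moves $T$, is precisely the substance of the non-pure case, and your closing paragraph admits it is unproven --- as you say, a nontrivial automorphism induced on the section $S_{1}$ gives no a priori control over whether the element $g^{t/q}$ moves any individual factor of $U_{i-1}$. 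What is correct in your note is the arithmetic of non-purity ($t=rc$ with $c>1$, and $g^{t/q}\in\langle g^{r}\rangle$ inducing an automorphism of order exactly $q$ on $S_{1}$) and the final order-theoretic step (a proper multiple of $r$ is $\succ r$); but the bridge between these two correct endpoints --- a choice of $T$, and of the lift $\hat{S}_{1}$, for which both the containment and its properness can be certified --- is missing, and that bridge \emph{is} the proposition.
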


\begin{proof}[Proof of Proposition~\ref{p2.4}] We have an element $g\in G$ of order equal to a product of $m$ primes such that
$\langle g\rangle\cap K_{ms}=1$. (Recall that $K_{j}$ is the kernel of the permutational action of $G$ on the simple factors whose product is $U_j$.) We want to show that the nonsoluble length of $R_{n}(g)$ is at least $s$. With this in mind, we consider the upper nonsoluble series for $R_{n}(g)$ constructed in the same way as \eqref{e1} was constructed for $G$, with its terms denoted by \begin{equation}\label{e2}
 1=\beta_{0}\leq \delta_{0}<\beta_{1}\leq \delta_{1}\cdots <\beta_{e}\leq \delta_{e}=R_{n}(g).
\end{equation}
Here each factor $\delta_{j}/\beta_{j}$ is the soluble radical (possibly, trivial) of $G/\beta_{j}$, and each factor $\beta_{j}/\delta_{j-1}$ is the (nontrivial) direct product of all subnormal nonabelian simple subgroups of $G/\delta _{j-1}$. Our task is to show that $e\geq s$.

Since $\langle g\rangle\cap K_{ms}=1$, the element $g$ has at least one nontrivial orbit on the
set of simple factors of $U_{ms}=B_{ms}/D_{ms-1}$, say, $\{S_{1},\ldots ,S_{r}\}$. If the orbit is pure, then we apply Proposition~\ref{p3.2} to this orbit. If the orbit is not pure, then we apply Proposition~\ref{p3.3}. Then we apply the same procedure to the
orbit $\{T_{1},\ldots ,T_{l}\}$ in $U_{ms-1}$ just obtained: this orbit takes the role of the orbit $\{S_{1},\ldots ,S_{r}\}$ in
Proposition~\ref{p3.2} or \ref{p3.3} depending on whether it is pure or not. We proceed with constructing this sequence of orbits, descending over
the sections $U_{i}$ making $ms-1$ such steps. If we make such a step from a pure orbit by Proposition~\ref{p3.2}, then the length of the new orbit is divisible by the length of the old orbit and therefore does not decrease with respect to the order $\prec$. If we make such a
step from a non-pure orbit by Proposition~\ref{p3.3}, then the length of the new orbit is strictly greater with respect to the order $\prec$ than the length of the old orbit. In the sequence of orbits thus constructed, some orbits may be pure, some not. Notice that there can be at most $m-1$ passages from a non-pure orbit, since $|g|$ is a product of $m$ primes. As a result, if the length of the sequence is at least $(s-1)m+(m-1)+1=ms$, then
this sequence must necessarily contain $s$ successive pure orbits, with $s-1$ consecutive passages between them. Let $i=t,t-1,\ldots ,
t-s+1$ be the indices of the corresponding sections $U_{i}$.

Recall that if a $g$-orbit $\{S_{1},\ldots ,S_{r}\}$ in $U_{i}$ is pure, then by Lemma~\ref{l2.3} the subgroup $S=S_{1}\cdots S_{r}$ is contained in the image of $R_{n}(g)$ in $G/D_{i-1}$ (since this image is obviously equal to the analogous subgroup $R_{n}(g)$ constructed for $G/D_{i-1}$).
The idea is to use each of these $s$ consecutive pure orbits in $U_{t},U_{t-1},\ldots ,U_{r-s+1}$ to `mark' a nonsoluble factor of the series \eqref{e2} and prove that the factor marked by the pure orbit in $U_{i-1}$ is necessarily `lower' in \eqref{e2} than the factor marked by
the pure orbit in $U_{i}$, for every $i=t,t-1,\ldots ,t-s+2$. Then the series \eqref{e2} must realize nonsoluble length at least $s$.

Thus, let $\{S_{1},\ldots ,S_{r}\}$ be a pure $g$-orbit in $U_{i}$, and $\{T_{1},\ldots ,T_{l}\}$ the pure $g$-orbit in $U_{i-1}$ obtained by Proposition~\ref{p3.2}. Recall that in accordance with Hypothesis 3.1, $\hat{S}$ is a minimal by inclusion $g$-invariant subgroup
of $R_{n}(g)$ such that $S=\hat{S}D_{i-1}/D_{i-1}$, and $\hat{S}_{1}$ is a minimal by inclusion subgroup of $\hat{S}$ such
that $\hat{S}_{1}D_{i-1}/D_{i-1}=S_{1}$. Note that since $S_{1}$ is nonabelian simple, $\hat{S}_{1}$ has no nontrivial soluble homomorphic images:
\begin{equation}\label{e3}
 \hat{S}_{1}=[\hat{S}_{1},\hat{S}_{1}].
\end{equation}
By Proposition~\ref{p3.2} we have an element $x_{0}\in \hat{S}_{1}\setminus D_{i-1}$ such that
\begin{equation}\label{e4}
 T_{1}^{x_{0}}\not =T_{1}.
\end{equation}
Consider the image of the series \eqref{e2} in $R_{n}(g)D_{i-1}/D_{i-1}$. Since $\hat{S}_{1}D_{i-1}/D_{i-1}\cong S_{1}$
is a subnormal nonabelian simple subgroup of $R_{n}(g)D_{i-1}/D_{i-1}$, we obviously have a well-defined index $j$ such that
\begin{equation}\label{e5}
 \hat{S}_{1}\leq \beta_{j}D_{i-1}\qquad \text{and}\qquad \hat{S}_{1}\not\leq \delta_{j-1}D_{i-1}.
\end{equation}
Notice that then also
\begin{equation}\label{e6}
 \hat{S}_{1}\leq \beta_{j}(D_{i-1}\cap R_{n}(g)).
\end{equation}
It is clear that the index $j$ depends only on $S_{1}$ (that is, it is independent of the choice of $\hat{S}_{1}$ such that
$\hat{S}_{1}D_{i-1}/D_{i-1}=S_{1}$). Since $\{T_{1},\ldots ,T_{l}\}$ is  a pure $g$-orbit in $U_{i-1}$, the product
$T=T_{1}\cdots T_{l}$ is also covered by $R_{n}(g)$ by Lemma~\ref{l2.3}. If $\hat{T}_{1}$ is any subgroup of $R_{n}(g)$ such that $\hat{T}_{1}D_{i-2}/D_{i-2}=T_{1}$, then again, there is a well-defined index $u$ depending only on $T_{1}$ such that
\begin{equation}\label{e7}
\hat{T}_{1}\leq \beta_{u}(D_{i-2}\cap R_{n}(g))\qquad \text{and} \qquad \hat{T}_{1}\not\leq \delta_{u-1}D_{i-2}.
\end{equation}

We will need the following lemma to finish the proof of Proposition~\ref{p2.4}.

\begin{lemm}\label{l3.4}
Under the above hypotheses, $j>u$.
\end{lemm}

\begin{proof}
We argue by contradiction and suppose that $j\leq u$. Then \eqref{e6} implies that $\hat{S}_{1}\leq \beta_{u}(D_{i-1}\cap
R_{n}(g))$. The image of $\beta_{u}/\delta_{u-1}$ in $R_{n}(g)/(D_{i-2}\cap R_{n}(g))$ is a direct product of nonabelian simple groups, one of which is $\bar{T}_{1}=\hat{T}_{1}/(\hat{T}_{1}\cap D_{i-2})\cong T_{1}$ by \eqref{e7}. Acting by conjugation the group $R_{n}(g)$ permutes these factors. Consider the permutational action of $R_{n}(g)$ on the orbit containing $\bar{T}_{1}$. Clearly, $\beta_{u}$ is contained in the kernel of this action. The subgroup $B_{i-1}\cap R_{n}(g)$ normalizes $\hat{T}_{1}$ modulo $D_{i-2}\cap R_{n}(g)$ (since $B_{i-1}$ normalizes $\hat{T}_{1}$ modulo $D_{i-2}$). As a normal subgroup of $R_{n}(g)$, then $B_{i-1}\cap R_{n}(g)$ is contained in the kernel of that action on the orbit containing $\hat{T}_{1}$. Since $D_{i-1}/B_{i-1}$ is soluble, we obtain that the image of $\hat{S}_{1}\leq \beta_{u}(D_{i-1}\cap R_{n}(g))$ in this action is soluble. But $\hat{S}_{1}=[\hat{S}_{1},\hat{S}_{1}]$ by minimality of $\hat{S}_{1}$, as noted in \eqref{e3}. Therefore this image must actually be trivial; in particular,
\begin{equation}\label{e8}
 \hat{T}_{1}^{x_{0}}\equiv \hat{T}_{1}\;(\mbox{mod\,}\delta_{u-1}(D_{i-2}\cap R_{n}(g))).
\end{equation}
On the other hand, since $T_{1}^{x_{0}}\not =T_{1}$ by \eqref{e4}, it follows that $[\hat{T}_{1},\hat{T}_{1}^{x_{0}}]\leq D_{i-2}\cap R_{n}(g)$. Together with \eqref{e8} this implies
$$
\bar{T}_{1}=[\bar{T}_{1},\bar{T}_{1}]\equiv 1\;(\mbox{mod\,}\delta_{u-1}(D_{i-2}\cap R_{n}(g))),
$$
contrary to \eqref{e7}. Lemma~\ref{l3.4} is proved.
\end{proof}

We now finish the proof of Proposition~\ref{p2.4}. In our sequence of orbits constructed above, each pure orbit $\{S_{1},\ldots ,S_{r}\}$ in $U_{i}$ for $i=t,\ldots ,t-s+1$
marks a nonsoluble quotient $\beta_{j}/\delta_{j-1}$ of the series \eqref{e2} in the sense of \eqref{e5}. By Lemma~\ref{l3.4} the next pure orbit in $U_{i-1}$ marks a strictly lower section. Therefore there must be at least $s$ different nonsoluble sections in \eqref{e2}, since their indices must be strictly descending as we go over the $s$ consecutive pure orbits. As a result, the nonsoluble length of $R_{n}(g)$ is at least $s$.
\end{proof}

We are now ready to finish the proof of Theorem~\ref{t1.3}.

\begin{proof}[Proof of Theorem~\ref{t1.3}]
Recall that we have $\lambda(R_{n}(g))=k$ and  $|g|$ is a product of $m$ primes counting multiplicities. We want to show that $g\in
D_{(k+1)m(m+1)/2}(G)$. We proceed by induction on $m$. As a base of this induction we can take the case of $m=0$, when $g=1$ and certainly $1\in D_{(k+1)\cdot 0\cdot (0+1)/2}(G)=D_0(G)$.

For $m>0$, by Proposition~\ref{p2.4} we have
$$
\langle g\rangle \cap D_{(k+1)m}\geq \langle g\rangle \cap K_{(k+1)m}\not =1.
$$
The corresponding subgroup $R_{n}(\bar{g})$ constructed for the image $\bar{g}$ of $g$ in $\bar{G}=G/D_{(k+1)m}$ is clearly a quotient of $R_{n}(g)$ and therefore has nonsoluble length at most $k$. As $|\bar{g}|$ is a product of at most $m-1$ primes, we have  $\bar{g}\in D_{(k+1)(m-1)m/2}(\bar{G})$ by the induction hypothesis. The result now
follows, since $D_{(k+1)(m-1)m/2}(\bar{G})$ is the image of $D_{(k+1)m(m+1)/2}$ in $G/D_{(k+1)m}$.
\end{proof}

\section{Generalized Fitting height}

In this section we prove Theorem~\ref{t1.2} on generalized Fitting height $h^*(G)$. This is achieved by combination of Theorem~\ref{t1.3} on nonsoluble length and Theorem~\ref{t1.1} on Fitting height.

\begin{proof}[Proof of Theorem~\ref{t1.2}]
Recall that $g$ is an element of a finite group $G$ whose order is a product of $m$ primes such that the generalized Fitting height of the right Engel sink $R_{n}(g)$ is equal to $k$. We need to show that $g$ belongs to
$F^{*}_{((k+1)m(m+1)+2)(k+3)/2}(G)$. As in the proof of Theorem~\ref{t1.1},  we consider the subnormal series
$$
G\geq [G,g]\geq [[G,g],g]\geq \cdots \geq [...[[G,\underbrace{g],g],\dots ,g}_{m}]=H,
$$
where $[H,g]=H$ (can also be trivial). Let $N=\langle H^{G}\rangle$ be the normal closure of $H$. Since $gN$ is a left Engel element in $G/N$, we know from Baer's theorem that   $gN\in F(G/N)$. Hence it
suffices to show that the generalized Fitting height of $N$ is at most  $((k+1)m(m+1)+2)(k+3)/2-1$.  Since $H$ is subnormal in $G$, by Lemma~\ref{l-sn}(b) it suffices to obtain the given bound for the generalized Fitting height of $H$.

 Since the nonsoluble
length of $R_{n}(g)$ does not exceed its generalized Fitting height, we know from Theorem~\ref{t1.3} that $g$ belongs to $D_{(k+1)m(m+1)/2}(G)$. Since $H=[H,g]$, it follows that
$H\leq D_{(k+1)m(m+1)/2}(G)$. Since $H$ is a subnormal subgroup, it follows that $\lambda(H)\leq (k+1)m(m+1)/2$ by Lemma~\ref{l-sn}(a). It remains to obtain the necessary bounds for the Fitting
height of the $(k+1)m(m+1)/2+1$ soluble factors of the upper
nonsoluble series of $H$, which are $g$-invariant (because $H=[H,g]$ is $g$-invariant).

Let $X=Y/Z$ be one of those soluble factors, where $Y$ and $Z$ are
$g$-invariant and normal in $H$ and therefore subnormal in $G$. Since $Y$ is subnormal, we have $h^{*}(R_{n}(g)\cap Y)\leq
h^{*}(R_{n}(g))=k$ by Lemma~\ref{l-sn}(a). Then the  image
of $R_{n}(g)\cap Y$ in $Y/Z$, which is soluble, has Fitting height at most $k$. Since, clearly, $R_{Y\langle g\rangle,n}(g)\leq R_{n}(g)\cap Y$, we obtain that $R_{X\langle g\rangle,n}(g)$, which is the
image of $R_{Y\langle g\rangle,n}(g)$, also has Fitting height
at most $k$.

Consider the semidirect product $X\rtimes \langle g\rangle$. By applying Theorem~\ref{t1.1} we obtain that
$g\in F_{k+1}(X\rtimes \langle g\rangle )$. Therefore,
$[X,g]\leq F_{k+1}(X\rtimes \langle g\rangle )\cap X\leq
F_{k+1}(X)$. In other words, $g$ acts trivially on $X/F_{k+1}(X)$. Since $H=[H,g]$, it follows that $H$ also acts trivially on $X/F_{k+1}(X)$, that is, $X/F_{k+1}(X)$ is a central section of $H$. In particular, $X=F_{k+2}(X)$ and thus the Fitting height of $X$ is at most $k+2$. Hence the generalized Fitting height of $H$ is at most
 $$
 ((k+1)m(m+1)/2+1)(k+2)+(k+1)m(m+1)/2=((k+1)m(m+1)+2)(k+3)/2-1,
 $$
as required.
\end{proof}

\section*{Acknowledgements}
The first author was supported  by the Russian Science Foundation, project no. 14-21-00065, the second by FAPDF, Brazil, and the third by EPSRC.
The bulk of this work was done during the visits of the first and third authors  to the University of Brasilia and they are grateful for the great hospitality of the university during these visits, which were supported by CNPq-Brazil for the first author, and  partly supported by EPSRC for the third author.

\end{document}